\newcommand{\leb}{\operatorname{Leb}}
\newcommand{\dist}{\operatorname{dist}}
\newcommand{\supp}{\operatorname{supp}}
\newcommand{\w} {\omega}
\newcommand{\ep}{\epsilon}
\renewcommand{\P}{\theta_\epsilon^\ZZ}
\newcommand{\x}{\times}
\newcommand{\bw}{\bar\w}
\newcommand{\bomega}{\bar\omega}
\def \RR {{\mathbb R}}
\def \ZZ {{\mathbb Z}}
\def \NN {{\mathbb N}}
\def \cR {\mathcal{R}}
\def \cp {\mathcal{P}}
\def \cb {\mathcal{B}}
\newcommand{\dem}{\begin{proof}}
\newcommand{\cqd}{\end{proof}}
\newcommand{\qand}{\quad\text{and}\quad}
\newtheorem*{Theorem*}{Theorem}
\newtheorem{maintheorem}{Theorem}
\newcommand{\cmt}{\begin{maintheorem}}
\newcommand{\fmt}{\end{maintheorem}}
\newtheorem{maincorollary}[maintheorem]{Corollary}
\newcommand{\cmc}{\begin{maincorollary}}
\newcommand{\fmc}{\end{maincorollary}}
\newtheorem{Theorem}{Theorem}[section]
\newcommand{\ct}{\begin{Theorem}}
\newcommand{\ft}{\end{Theorem}}
\newtheorem{Corollary}[Theorem]{Corollary}
\newcommand{\cco}{\begin{Corollary}}
\newcommand{\fco}{\end{Corollary}}
\newtheorem{Proposition}[Theorem]{Proposition}
\newcommand{\cpr}{\begin{Proposition}}
\newcommand{\fpr}{\end{Proposition}}
\newtheorem{Lemma}[Theorem]{Lemma}
\newcommand{\cle}{\begin{Lemma}}
\newcommand{\fle}{\end{Lemma}}
\newtheorem{Sublemma}[Theorem]{Sublemma}
\newcommand{\csle}{\begin{Sublemma}}
\newcommand{\fsle}{\end{Sublemma}}
\theoremstyle{remark}
\newtheorem{Remark}[Theorem]{Remark}
\newcommand{\cre}{\begin{Remark}}
\newcommand{\fre}{\end{Remark}}
\newtheorem{Definition}[Theorem]{Definition}
\newcommand{\cd}{\begin{Definition}}
\newcommand{\fd}{\end{Definition}}
\title{On the liftability of expanding  stationary  measures}
\author{Jos\'e F. Alves}
\address{Jos\'e F. Alves\\ Departamento de Matem\'atica, Faculdade de Ci\^encias da Universidade do Porto\\
Rua do Campo Alegre 687, 4169-007 Porto, Portugal}
\email{jfalves@fc.up.pt} \urladdr{http://www.fc.up.pt/cmup/jfalves}
\author{Carla L. Dias}
\address{Carla L. Dias\\ Departamento de Matem\'atica, Faculdade de Ci\^encias da Universidade do Porto\\
Rua do Campo Alegre 687, 4169-007 Porto, Portugal}
\email{carlald@fc.up.pt}
\author{Helder Vilarinho }
\address{Helder Vilarinho\\ Universidade da Beira Interior, Centro de Matem\'atica e Aplica\c c\~oes (CMAUBI)\\
Rua Marqu\^es d'\'Avila e Bolama, 6200-001 Covilh\~a, Portugal}
\email{helder@ubi.pt}
\urladdr{http://www.mat.ubi.pt/$\sim$helder}
\date{\today}
\thanks{The authors were  supported by CMUP (UID/MAT/00144/2013) and the project PTDC/MAT-CAL/3884/2014 funded by Funda\c{c}\~ao para a Ci\^encia e a Tecnologia (FCT) Portugal with national (MEC) and European structural funds through the program FEDER, under the partnership agreement PT2020. JFA was also  supported by The Leverhulme Trust VP2-2017-004 Visiting Professorship. HV was also  supported by FCT through Centro de Matem\'atica e Aplica\c c\~oes da Universidade da Beira Interior (CMAUBI), project UID/MAT/00212/2019.}
\subjclass{37A50, 37D25, 60G52}
\keywords{Lyapunov exponent, GMY structure, stationary measure, random dynamical system}
\begin{document}
\begin{abstract}
We consider random perturbations of a topologically transitive local diffeomorphism of a Riemannian manifold.
We show that if an absolutely continuous ergodic stationary measures is expanding (all Lyapunov exponents positive), then there is a random Gibbs-Markov-Young structure which can be used to lift that measure.
We also prove that if the original map admits a finite number of expanding invariant measures then the stationary measures of a sufficiently small stochastic perturbation are expanding.
 \end{abstract}
\maketitle
\tableofcontents

\section{Introduction}

The existence of \emph{inducing schemes} is a powerful tool to understand ergodic properties of a dynamical system. An interesting situation occurs in the presence of a Markov partition with certain regularity properties for the maps induced on the domains of the partition (Gibbs property). In the remarkable works of L.-S. Young~\cite{Y98,Y99} some procedures were established to extract relevant ergodic information from such inducing  schemes, making use of the so-called Young towers. In view of this systematization these induced elements are often referenced as \emph{Gibbs-Markov-Young (GMY) structures}.

 In the deterministic situation,  GMY structures were used to prove the existence of absolutely continuous invariant probability measures (acip), to study decay of correlations and other statistical features or to prove the statistical stability of a system, among others; see for instance \cite{ACF10,ADLP16,AP10,KKM19,MN05,MN08,MN09,Y98,Y99}. On the other hand, in \cite{ADL13} is studied the geometry of a given acip and the liftability problem in the sense of understanding whenever a given acip implies the existence of a GMY structure with an induced invariant measure that projects to the initial acip.

In this work, we are interested in random perturbations of  discrete-time dynamical systems.
This will be done  considering a parameter space $T$ and  a family of maps $(f_t)_{t\in T}$, typically close to an initial map $f$, and a probability measure $\theta$ on the space $T$ that gives a law of choices in each iteration: we replace the original dynamics $f$ by a near one randomly elected from the available random maps $f_t$, leading to an independent  identically distribution of maps along time.
Stochastic stability issues emerge when we consider a family of measures $(\theta_\ep)_{\ep>0}$ supported on an $\ep$-neighborhood of (the parameter associated to) $f$ and compare the invariant measures associated to the perturbed dynamics -- stationary measures -- with the invariant measures of the deterministic dynamics $f$.
This is mainly given by two major perspectives: an average (``annealed") setting,  natural to formalize in terms of a Markov chain, or an almost sure approach, by considering a skew product dynamics and looking for results along generically random orbits.

Random versions of GMY structures have also been a successful tool to understand ergodic properties of randomly perturbed dynamics, either to prove their stochastic stability in~\cite{AV13}, or to estimate decay of correlations through random orbits and other statistical features in~\cite{ABR19,BaBeM02,BaV96,BBD14,LV18}.

In this work, we consider random perturbations of a   local diffeomorphism  on a compact manifold with no  boundary and exhibiting topological transitive behaviour.
In this setting, the main purpose of this paper is to characterize the family of expanding (all Lyapunov exponents positive) absolutely continuous ergodic stationary (\emph{aces}) measures in the sense that come from a random GMY structure, that is, those stationary measures are liftable. In this way, we also characterize the family of stationary measures accumulating (in the weak$^*$ sense) on the convex hull of the expanding ergodic invariant probability measures.


 The existence of GMY structures starting from expanding aces measures is given by Theorem~\ref{main}. This is proved in two moments. First we prove in Proposition~\ref{NUERO on A} that there exists some subset $A\subseteq M$ on which some power of $f$ satisfies some quite strong random expansivity (non-uniformly expanding on random orbits), and later, in Proposition~\ref{random structure} we prove that non-uniformly expanding on random orbits implies the existence of an induced GMY structure.

In Theorem~\ref{liftmeasure} we prove that the unique (due to topological transitivity of the original map) expanding  aces measure is liftable, in the sense that it can be obtained  as a projection of an induced measure in a random GMY structure with integrable return times with respect to the reference  measure. In some sense, we provide information about the geometry of expanding aces measures.
%

We discuss the stability of expanding measures under random perturbations in Theorem~\ref{stabmain}, where we show that if the invariant measures are expanding, then stochastic stability implies that the same holds for the absolutely continuous ergodic stationary  aces measures. To a better understanding of this question we notice that Lyapunov exponents are typically only defined almost everywhere, with respect to an invariant measure  for determinist dynamical systems, or with respect to a stationary measure   for random dynamical systems; see \cite{A98,O68}. Though assuming a continuous dependence of those measures with respect to the dynamics, it is not true in general that the Lyapunov exponents depend continuously on the map, as the simple case of the one-dimensional quadratic family illustrate; see Subsection~\ref{se.quadratic} for  details.  In this case, we have a sequence of maps with negative Lyapunov exponent whose invariant measures accumulates on an invariant measure for the limiting dynamics with positive Lyapunov exponent.  The situation is different for local diffeomorphisms, where we show that expanding invariant measures are necessarily accumulated by expanding (invariant or stationary) measures.

This paper is organized as follows. In Section~\ref{s:Setting and main results} we introduce basic definitions related to random perturbations and give the main results. In Section~\ref{s:Expanding implies NUERO} we obtain non-uniform expansion from the expansivity hipothesis. This reduces Theorem~\ref{main} to obtaining a GMY structure from this non-uniform expansion (Proposition~\ref{random structure}) which is proved along Section~\ref{s:NUERO _GMY}. In Section~\ref{s:lift} we construct a stationary measure starting from a random GMY structure and answer the liftability question proving Theorem~\ref{liftmeasure}. Theorem~\ref{stabmain} on the stability of expanding measures under random perturbations is proved in Section~\ref{s:stability}.

\section{Setting and main results}\label{s:Setting and main results}

\subsection{Random perturbations}\label{s:Random perturbations}
Let $M$ be a compact Riemannian manifold endowed with a normalized volume measure $\leb$ that we call
Lebesgue measure, and let $f\colon M\to M$ be a $C^2$ local diffeomorphism. The idea of random perturbations is to replace the original deterministic
dynamics $f$ by a dynamic given through maps
$f_t$ elected
independently and identically distributed in each iterate. For this, we consider a metric (parameter) space $T$ endowed with some probability measure $\theta$ defined on the Borel $\sigma$-algebra $\mathcal T$ of $T$, and a continuous map
$$
 \begin{array}{rccl}
 \Phi
:& T &\longrightarrow&  C^2(M,M)\\
 & t &\longmapsto &  f_t
\mathbf{} \end{array}
 $$
such that $f=f_{t^*}$ for some  $t^*\in T$. Consider the product measurable space $
 T^{\mathbb N}$, endowed with the Borel product
$\sigma$-algebra $\mathcal F=\mathcal T^{\mathbb N}$ and the probability product measure
$\theta^{\mathbb N}$. Let  $\sigma: T^{\NN}\to T^{\NN}$ given by $\sigma(\w_1,\w_2,\ldots)=(\w_2,\w_3,\ldots)$ be the shift map and set $\pi\colon T^\NN\to T$ to be the projection on the first coordinate $\pi(\w)=\w_1$. We use the measurable map $\Phi\circ\pi\colon T^\NN\to C^2(M,M)$ to set $f_\omega=(\Phi\circ\pi)(\w)=f_{\w_1}$, for each
$\omega=(\omega_1,\omega_2,\ldots)\in T^{\NN}$.  We define $f_{\omega}^0={\text{Id}}_M$, and for $n\geq 1$,
\begin{equation*}
f_{\omega}^n(x)=
 (f_{\sigma^{n-1}(\omega)}\circ\cdots\circ f_\omega)(x)=(f_{\w_{n-1}}\circ\cdots\circ f_{\w_1})(x).
\end{equation*}
Set $\ZZ_+=\{0,1,2,\ldots\}$. Given $x\in M$ and a \emph{realization} $\omega\in  T^{\NN}$ we call the sequence
$\big(f_{\omega}^n(x)\big)_{n\in{\mathbb Z}_+}$ a {\em random orbit} of $x$. Notice that the realization
$\omega^*=(\tau^*,\tau^*,\ldots)\in T^{\NN}$ gives rise to the unperturbed deterministic orbits given by the
original dynamics~$f$.
We refer to  $(\Phi,\theta)$ as a \emph{random perturbation} of $f$.
A probability measure $\mu$  on the Borel sets of $M$ is called
{\em stationary} (for  $(\Phi,\theta)$) if
$$\iint(\varphi\circ f_t)(x)
\,d\mu(x)d\theta(t)=\int\varphi(x)\,d\mu(x),$$
for all $\varphi:M\to\RR$ continuous.
 We say that a  set $A\subset M$ is {\em random invariant}  if for
$\mu$-almost every (a.e.) $x\in M$ we have
 \begin{enumerate}
\item[]  $x\in A\, \Rightarrow \,f_t(x)\in A$ for  $\theta$-a.e.  $t\in T$; and
\item[] $x\in M\setminus A \Rightarrow f_t(x)\in M\setminus A$ for $\theta$-a.e.  $t\in T$.
 \end{enumerate}
A stationary measure $\mu$ is said to be {\em ergodic} (w.r.t. $(\Phi,\theta)$) if for
every random invariant set $A$ we have
 $\mu(A)=0$ or $\mu(A)=1$.
Denote by $\cb$ the Borel $\sigma$-algebra on $M$, and endow $ T^{\NN}\x M$ with the product $\sigma$-algebra $\mathcal F\otimes\cb$. We define the {\em skew-product} map as the locally constant measurable map
 \begin{equation*}
\begin{array}{rccc} S: &  T^{\NN}\times M &\longrightarrow &
 T^{\NN}\times M\\
 & (\omega, x) &\longmapsto & \left(\sigma(\omega),f_{\omega}(x)\right).
\end{array}
 \end{equation*}
It is well known that
a Borel probability measure   $\mu$ on $M$  is a stationary  measure if and only if $\theta^\NN\times\mu$ is $S$-invariant, and
$\mu$ is ergodic if and only if $\theta^\NN\times\mu$ is $S$-ergodic; see e.g.~\cite{O83}.

We recall the definition of Lyapunov exponents, either in the deterministic and in the random situation, as well the notion of expanding measures.
Given $x\in M$ and $v\in T_xM\setminus\{0\}$ we define the \emph{Lyapunov exponent} (w.r.t. $f$)
\begin{equation}\label{lyap exp det}
\lambda(x,v)=\lim_{n\to+\infty}\frac1n\log\|Df^n(x)v\|,
\end{equation}
whenever the limit exist. The classical Oseledet's Multiplicative Ergodic Theorem (MET) ensures that under the integrability condition $\log^+ \|Df(x)\|\in L^1(\mu_f)$, for some $f$-invariant finite measure $\mu_f$, the Lyapunov exponents are well defined for $\mu_f$-a.e.  $x\in M$. Moreover, if $\mu_f$ is ergodic then there at most $\dim(M)$ possible values for the limit in~\eqref{lyap exp det} for $\mu_f$-a.e. $x\in M$. We say that an $f$-invariant probability measure $\mu_f$ is \emph{expanding} if  all Lyapunov exponents   are positive, i.e. for $\mu_f$-a.e. $x\in M$ and all $v\in T_xM\setminus\{0\}$ we have $\lambda(x,v)>0$.
Consider now a random perturbation $(\Phi,\theta)$ of $f$. Given $(\w,x)\in T^{\NN}\times M$ and $v\in T_xM\setminus\{0\}$ we define the \emph{Lyapunov exponent} (with respect to  $(\Phi,\theta)$)
\begin{equation}\label{lyap exp}
\lambda(\w,x,v)=\lim_{n\to+\infty}\frac1n\log\|Df_\w^n(x)v\|,
\end{equation}
whenever the limit exist. The MET for one-sided random perturbations on manifolds (see e.g.  \cite[Theorem 3.2]{LQ95}) asserts that if $\log^+\|Df_\w(x)\|\in L^1(\theta^\NN\times\mu)$ for some stationary measure $\mu$, then the Lyapunov exponents are well defined for $(\theta^\NN\times \mu)$-a.e.  $(\omega,x)\in  T^{\NN}\times M$. Moreover, if $\mu$ is ergodic, then there at most $\dim(M)$ possible real values for the limit in~\eqref{lyap exp} $(\theta^\NN\times \mu)$-a.e.  $(\omega,x)\in  T^{\NN}\times M$. We say that a stationary measure $\mu$ is \emph{expanding} if all Lyapunov exponents  are positive, i.e. for $(\theta^\NN\times\mu)$-a.e.   \( (\w,x) \)  and all \(v\in T_xM\setminus\{0\}\) we have  \(\lambda(\w,x,v)> 0\).

We notice that if we assume that the random maps $f_\w$ are $C^1$ diffeomorphisms (that is, $\Phi(T)$ is contained in the set of $C^1$ diffeomorfisms in $M$) then the sum of all Lyapunov exponents (counting with multiplicity) is less or equal than zero for $(\theta^\NN\times\mu)$-a.e.  $(\w,x)$, being that the equality holds if $f_\w$ preserves the stationary measure $\mu$ for $\theta^\NN$-a.e.  $\w\in T^\NN$; see \cite[Theorem V.1.3]{Ki86} for the ergodic situation and \cite[Theorem 3.2]{LQ95} for the general case.

\subsection{Random GMY structures and   liftability}\label{s:main}
Consider now a family $(\theta_\ep)_{\ep>0}$ of Borel probability measures on the metric space $T$, which in some sense give the distribution of random choices at each noise level $\ep>0$.  This allows us to define the family $(\Phi,(\theta_\ep)_{\ep>0})$ of random perturbations. The stability issues emerges when we compare the original dynamics with the randomly perturbed situation as the noise level becomes smaller, i.e. as $\ep$ goes to~0. In order to have some topological control on the random maps  we will assume that the supports $\supp(\theta_\ep)$ of the measures $\theta_\ep$ satisfy
\[
\supp(\theta_\ep)\to\{\tau^*\},\,\text{ as } \,\ep\to0.
\]
 We say that a map $f$ is \emph{stochastically stable} with respect to $(\Phi,(\theta_\ep)_{\ep>0})$ if there is a finite number of  ergodic $f$-invariant probability measures $\mu_1,\ldots,\mu_p$ and the weak$^*$ accumulation points of any family of absolutely continuous ergodic stationary measures $(\mu_\ep)_{\ep>0}$ lie in the convex hull of  $\mu_1,\ldots,\mu_p$, when $\ep\to0$. We stress that accordingly this definition we are always assuming that the stationary measures are absolutely continuous and ergodic, which is not necessarily the case of the $f$-invariant measures $\mu_i$, for which we only require the ergodicity.

\subsubsection{Random GMY}
In the following we introduce the induced Gibbs-Markov-Young (abbreviated GMY) structures for a random perturbation $(\Phi,\theta_\ep)$.

\begin{Definition}\label{def GMY}
We say that  $(\Phi,\theta_\ep)$ induces a \emph{random GMY structure} in a ball $\Delta\subset M$, with measurable inducing time ${\mathcal R}:T^\NN\times \Delta \to \NN$, if there exist $0<\kappa<1$ and $K>0$ such that for $\theta_\ep^\NN$-a.e.  $\omega\in T^\NN$:
\begin{enumerate}
\item there is a countable partition ${\mathcal P}_{\omega}$ of open sets of a $\leb$ full measure subset $\mathcal{D}_\omega$ of $\Delta$;
\item the {\emph{return time function}} ${\mathcal R}_\omega: \mathcal{D}_\omega\to{\mathbb N}$, given by $\cR_\w(x) = {\mathcal R}(\omega, x)$, is constant in each
$U_{\omega}\in{\mathcal P}_{\omega}$,
\item the map $F_{\omega}=f_\omega^{{\mathcal R}_\omega}:\Delta\to\Delta$ verifies:
\begin{enumerate}
\item[(i)] ${F}_{\omega}\vert_{U_{\omega}}$ is a $C^2$ diffeomorphism onto $\Delta$, for all
$U_{\omega}\in{\mathcal P}_{\omega}$;
\item[(ii)] for $x$ in the interior of $U_{\omega}\in{\mathcal P}_{\omega}$ we have $\|D
    F_{\omega}(x)^{-1}\| <\kappa;$
\item[(iii)] for every
    $U_{\omega}\in{\mathcal P}_{\omega}$ and $x,y\in U_{\omega}$
    \[
    \log\left|\frac{\det D  F_{\omega}(x)}{\det D  F_{\omega}(y)}\right| \leq
K
    \dist( F_{\omega}(x), F_{\omega}(y)).
    \]
  \end{enumerate}
\end{enumerate}
\end{Definition}

Our first result asserts that expanding aces measures imply the existence of a random GMY structure.


\begin{maintheorem}\label{main} Let \( f: M \to M \) be a \( C^{2} \) local diffeomorphism  such that $f^n$ is topologically transitive for all $n\in\NN$. For each family of expanding  aces measures for random perturbations $(\Phi,(\theta_\ep)_{\ep>0})$ there is some ball $\Delta\subset M$ such that if
$\epsilon>0$ is small enough then $(\Phi,\theta_\ep)$ induces a random GMY structure in $\Delta$.
\end{maintheorem}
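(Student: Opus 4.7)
The plan is to follow the two-step reduction sketched in the introduction. Fix a family $(\mu_\ep)_{\ep>0}$ of expanding aces measures for $(\Phi,(\theta_\ep)_{\ep>0})$, and denote the unperturbed map and distinguished parameter by $f=f_{t^*}$.

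\textbf{Step 1 (non-uniform expansion on a positive measure set).} First I would invoke Proposition~\ref{NUERO on A} applied to each $\mu_\ep$. The expansion hypothesis, combined with the random Oseledets MET, gives positive Lyapunov exponents $(\theta_\ep^\NN\x\mu_\ep)$-a.e. An Egoroff--Pliss type argument then upgrades this to a uniform statement on a positive $\mu_\ep$-measure subset $A_\ep\subseteq M$: there exist $\lambda=\lambda(\ep)>0$ and an integer $N=N(\ep)$ such that for every $x\in A_\ep$ and $\theta_\ep^\NN$-a.e.\ $\w$,
\[
\limsup_{n\to\infty}\frac1n\sum_{j=0}^{n-1}\log\bigl\|Df_{\sigma^{jN}\w}^N\bigl(f_\w^{jN}(x)\bigr)^{-1}\bigr\|\le -\lambda.
\]
This is the random analogue of the classical non-uniform expansion condition, and is exactly the output of Proposition~\ref{NUERO on A}.

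\textbf{Step 2 (find a ball and invoke Proposition~\ref{random structure}).} With $A_\ep$ in hand, the next step is to select a ball $\Delta$ and apply Proposition~\ref{random structure}, whose conclusion is precisely the existence of a random GMY structure over the given ball with uniform constants $0<\kappa<1$ and $K>0$. Topological transitivity of every iterate $f^n$ lets us, for each small $\ep$, work around a point $z$ in the support of $\mu_\ep$ whose deterministic and random forward orbits are dense; a ball $\Delta$ around such a point can be chosen to lie inside the ``good'' region determined by $A_\ep$ up to a $\leb$-null set, and random preimages of $\Delta$ under the iterates $f_\w^{kN}$ then cover a positive $\leb$-measure portion of $A_\ep$, triggering the hypotheses of Proposition~\ref{random structure}.

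\textbf{Main obstacle.} The delicate point is to choose $\Delta$ uniformly in $\ep$, since the set $A_\ep$ and the constants $\lambda(\ep),N(\ep)$ all a priori depend on the noise level. One exploits the continuity of $\Phi$ and the support convergence $\supp(\theta_\ep)\to\{t^*\}$ to compare the perturbed dynamics with $f$ and obtain uniform lower bounds on the expansion rate and on the relative density of $A_\ep$ inside a fixed region. Topological transitivity of each power $f^n$ is crucial here: it prevents periodic subdecompositions that would otherwise force different balls for different residue classes, and it guarantees that any fixed small ball can be reached by random preimages of arbitrary positive $\leb$-measure sets, so a single $\Delta$ accommodates all sufficiently small noise levels. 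Once these uniformities are secured, Proposition~\ref{random structure} finishes the proof.
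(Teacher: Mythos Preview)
Your outline matches the paper's two-step strategy, but one step is missing. Proposition~\ref{random structure}, applied to the $N$-th power perturbation $(\Phi_N,\theta_\ep^N)$ on the $N$-random invariant set $A$ furnished by Proposition~\ref{NUERO on A}, produces a random GMY structure for $(\Phi_N,\theta_\ep^N)$ with return time $\tilde\cR:T_N^\NN\times\Delta\to\NN$, not for $(\Phi,\theta_\ep)$ itself. The paper then explicitly converts: under the identification of $T^\NN$ with $T_N^\NN$ from Section~\ref{power}, one takes the same partition elements $\mathcal P_\w=\mathcal P_{\tilde\w}$ and sets $\cR(\w,x)=N\cdot\tilde\cR(\tilde\w,x)$. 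Without this pull-back you have only built a GMY structure for the wrong dynamics. This is also where the hypothesis that \emph{every} power $f^n$ is transitive enters: it is needed so that $f^N$ satisfies the transitivity hypothesis of Proposition~\ref{random structure}; your explanation in terms of ``preventing periodic subdecompositions'' is not how the paper uses it.

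A smaller point: your description of Proposition~\ref{NUERO on A} as an ``Egoroff--Pliss type argument'' is off. The paper's route is to prove the integral bound $\int\log\|(Df_\w^N)^{-1}\|\,d(\theta_\ep^\NN\times\mu_\ep)<-c$ for large $N$ (Lemma~\ref{lem:neg}), pass to an $N$-ergodic component $(A,\mu_\ep\vert A)$ via Proposition~\ref{N decomposition}, and then apply Birkhoff's ergodic theorem to obtain the NUERO condition~\eqref{NUEOA} on $A$; positive Lebesgue measure of $A$ comes from $\mu_\ep(A)\ge 1/N$ and absolute continuity. The Pliss lemma enters only later, inside the proof of Proposition~\ref{random structure}, to produce hyperbolic times.
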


We prove Theorem~\ref{main} in two main steps. First we prove that there exists some subset $A\subseteq M$ on which random perturbations of some power of $f$ satisfies some quite strong random expansivity (non-uniformly expanding on random orbits). This is done in Proposition~\ref{NUERO on A}. Secondly, in Proposition~\ref{random structure} we prove that non-uniformly expanding on random orbits implies the existence of an induced GMY structure.

\subsubsection{Liftability} Our result can also be viewed in the context of the so-called ``liftability problem''.
To understand how it works first we give the steps of the construction of a stationary measure from a random GMY structure with integrable return time (for details see Section~\ref{s:lift}):
%
\begin{itemize}
  \item[(i)] Consider two-sided random perturbations by having $\bar\w\in T^\ZZ$.
  \item[(ii)] Having a (two-sided) random GMY structure on a ball $\Delta$ implies the existence of a family $(\nu_{\bar\w})_{\bar\w\in T^\ZZ}$ of finite induced absolutely continuous measures $\nu_{\bar\w}$ on $\Delta$ which define a measure $\nu$ on $T^\ZZ\x\Delta$ that disintegrates as $d\nu(\bar\w,x)=d\nu_{\bar\w}(x)d\theta_\ep^\ZZ(\bar\w)$.
    \item[(iii)] Define the absolutely continuous projection measures on $M$:
\begin{equation*}
{\mu}_{{\bar\w}}=\sum_{j=0}^{+\infty}
(f_{{\sigma}^{-j}({\bar\w})}^j)_*(\nu_{\sigma^{-j}({\bar\w})}\vert\{\bar{\cR}_{{\sigma}^{-j}.
({\bar\w})}>j\}).\end{equation*}
  \item[(iv)] Consider the  measure $\tilde\mu_\ep=\int\bar\mu_{\bar{\w}} \,d\theta_\ep^\ZZ(\bar{\w})$, which is finite due to the $\nu$-integrability of the inducing time function $\bar{\cR}$.
  \item[(v)] Finally, take the normalization  $\mu_\ep=\tilde\mu_\ep/\tilde\mu_\ep(M)$, which is an aces probability measure.
\end{itemize}
In this case, we say that the measure $\nu$ on $T^\ZZ\x\Delta$ that disintegrates on the family $(\nu_{\bar\w})_{\bar{\w}\in T^\ZZ}$ is the lift of the stationary measure $\mu_\ep$ (and $\mu_\ep$ is liftable) and that $\mu_\ep$ is the projection of~$\nu$. It is known that the transitivity of the original map implies the uniqueness of the aces probability  measure. The next result ensures that if this stationary measure is expanding then it is liftable. It is proved in Section~\ref{s:lift}.
\begin{maintheorem}\label{liftmeasure} Let \( f: M \to M \) be a \( C^{2} \) local diffeomorphism such that $f^n$ is topologically transitive for all $n\in\NN$ and consider a family $(\mu_\ep)_{\ep>0}$ of expanding aces measures for $(\Phi,(\theta_\ep)_{\ep>0})$. If $\epsilon>0$ is small enough, then
the unique aces probability measure $\mu_\ep$ is liftable.
\end{maintheorem}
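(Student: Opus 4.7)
The plan is to combine Theorem~\ref{main} with the uniqueness of the aces probability measure to produce a lift of $\mu_\ep$. Under the topological transitivity assumption on every iterate $f^n$, together with absolute continuity and ergodicity, the stationary measure $\mu_\ep$ is uniquely determined. Hence it suffices to carry out the five-step construction (i)--(v) starting from the random GMY structure provided by Theorem~\ref{main}, obtain \emph{some} aces probability measure $\mu_\ep'$, and conclude by uniqueness that $\mu_\ep'=\mu_\ep$, thereby identifying $\mu_\ep$ as the projection of the lift $\nu$.

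First I would promote the one-sided random GMY structure of Definition~\ref{def GMY} to a two-sided structure indexed by $\bw\in T^\ZZ$; since $\cp_\w$, $\cR_\w$ and $F_\w$ depend only on the forward coordinates $(\w_1,\w_2,\ldots)$, the passage to $\bw$ is essentially a notational rewriting that produces $\bar\cp_{\bw}$, $\bar\cR_{\bw}$ and $\bar F_{\bw}$. Next I would build the family $(\nu_{\bw})_{\bw\in T^\ZZ}$ of finite absolutely continuous measures on $\Delta$ by a random Perron--Frobenius argument: the uniform backward contraction in (ii) and the bounded distortion in (iii) yield an invariant cone of densities on which the quenched transfer operator associated with $\bar F_{\bw}$ acts as a contraction, producing densities $d\nu_{\bw}/d\leb$ which assemble into a measure $\nu$ on $T^\ZZ\x\Delta$ disintegrating as $d\nu(\bw,x)=d\nu_{\bw}(x)\,d\theta_\ep^\ZZ(\bw)$ and invariant under the skew product $(\bw,x)\mapsto(\sigma(\bw),\bar F_{\bw}(x))$. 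The $\nu$-integrability of the inducing time $\bar\cR$ is to be inherited from the tail estimates on the return times produced in the proof of Theorem~\ref{main}.

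Having $\nu$ at hand, step (iii) defines absolutely continuous measures $\mu_{\bw}$, step (iv) yields the total measure $\tilde\mu_\ep$, and step (v) normalizes it. It then remains to verify that $\mu_\ep'$ is stationary (from the $S$-invariance of the tower extension of $\nu$), absolutely continuous (inherited from each $\nu_{\bw}$), ergodic (from ergodicity of the induced random system together with the transitivity of each $f^n$), and finite before normalization (from $\bar\cR\in L^1(\nu)$). The main obstacle is this cluster of verifications tied to the transfer operator construction: setting up the cone and distortion estimates in the random two-sided setting, and securing $\nu$-integrability of $\bar\cR$, which is the essential quantitative ingredient guaranteeing $\tilde\mu_\ep(M)<\infty$. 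Once $\mu_\ep'$ is an aces measure, uniqueness forces $\mu_\ep'=\mu_\ep$, so $\mu_\ep$ is the projection of $\nu$ and is liftable.
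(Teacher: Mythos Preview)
Your overall architecture matches the paper's: invoke Theorem~\ref{main} to get the random GMY structure, pass to the two-sided setting, build the induced measures $(\nu_{\bw})_{\bw}$ (the paper imports this from \cite{AV13} as Theorem~\ref{exist.mu.induced} rather than redoing the transfer-operator argument), project to an aces measure, and conclude by uniqueness. One small slip: the induced skew product is $\Theta(\bw,x)=(\bar\sigma^{\bar\cR(\bw,x)}(\bw),F_{\bw}(x))$, shifting by the \emph{return time}, not by one step; this matters because $\Theta$-invariance of $\nu$ is what drives the ergodic-theoretic argument below.

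The genuine gap is your proposed source for $\bar\cR\in L^1(\nu)$. You write that integrability ``is to be inherited from the tail estimates on the return times produced in the proof of Theorem~\ref{main}'', but that proof does not produce such tail estimates: Section~\ref{s:NUERO _GMY} only shows, via summability of the satellite measures $\sum_n\leb(S_\w^n)<\infty$ and Borel--Cantelli, that $\mathcal P_\w$ is a $\leb$ mod~0 partition. No control of $\leb(\{\cR_\w>n\})$ is established (indeed the paper remarks that the analogous result in \cite{AV13} \emph{assumed} uniform decay of these tails, which is precisely what is being avoided here). The paper instead proves $\nu$-integrability of $\bar\cR$ by a different mechanism (Lemma~\ref{RSH} and Proposition~\ref{pr.a-d}): for each $(\bw,x)$ one compares the number $R^{(n)}$ of induced returns before time $n$ with the number $H^{(n)}$ of hyperbolic times and the number $S^{(n)}$ of satellite visits, obtaining $\eta R^{(n)}+S^{(n)}\ge H^{(n)}$. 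Since $H^{(n)}/n\ge\zeta>0$ by positive frequency of hyperbolic times, and $S^{(n)}/R^{(n)}$ has a finite $\nu$-a.e.\ limit by Birkhoff applied to the $\Theta$-invariant measure $\nu$ (using $\sum_n\leb(S_\w^n)<\infty$), one deduces that the Birkhoff averages of $\bar\cR$ are bounded, hence $\int\bar\cR\,d\nu<\infty$. So the key quantitative input is not a tail bound but the combinatorics of hyperbolic times and satellites together with the $\Theta$-invariance of $\nu$.
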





\subsection{Stability of expanding measures}\label{s:intro stability}

We will see that expanding invariant measures can only be accumulated by absolutely continuous ergodic stationary measures if they are expanding. In particular expanding stationary measures emerge under stochastic stability if the $f$-invariant ergodic probability measures are expanding. We stress that the next result holds for the deterministic map $f$ and the random maps $f_t$ in $C^1$.
\begin{maintheorem}\label{stabmain}
Let \( f: M \to M \) be a \( C^{2} \) local diffeomorphism admitting a finite number of expanding ergodic invariant  probability measures $\mu_1,\ldots,\mu_p$. If $f$ is {stochastically stable} with respect to $(\Phi,(\theta_\ep)_{\ep>0})$ then, for small enough $\ep>0$, any aces measure $\mu_\ep$ is expanding.
\end{maintheorem}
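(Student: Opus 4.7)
The plan is to argue by contradiction, using Kingman's subadditive ergodic theorem to express the smallest Lyapunov exponent (deterministic and random) as the infimum of a sequence of integrals involving the inverse differential, and then to pass to the limit $\ep\to 0$ by combining stochastic stability with the continuity of the $N$-step inverse cocycle on the compact manifold $M$.

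On the deterministic side, I would apply Kingman's theorem to the bounded subadditive cocycle $g_n(x)=\log\|Df^n(x)^{-1}\|$ to obtain $-\lambda^{\min}(\mu_i)=\inf_n\tfrac{1}{n}\int g_n\,d\mu_i$ for each ergodic $\mu_i$. Since every $\mu_i$ is expanding, one can choose a common $N\in\NN$ and $\delta>0$ with $-\tfrac{1}{N}\int g_N\,d\mu_i>2\delta$ for each $i=1,\ldots,p$; by linearity this bound persists, with $2\delta$ replaced by $\delta$, for every $\mu\in\mathrm{conv}\{\mu_1,\ldots,\mu_p\}$. On the random side, the ergodicity of any aces measure $\mu_\ep$ makes $\theta_\ep^\NN\times\mu_\ep$ an $S$-ergodic probability, so Kingman applied to $G_n(\omega,x)=\log\|Df_\omega^n(x)^{-1}\|$, which is subadditive by the cocycle identity $Df_\omega^{m+n}(x)=Df_{\sigma^m\omega}^n(f_\omega^m(x))\,Df_\omega^m(x)$, yields
\[
-\lambda^{\min}(\mu_\ep)=\inf_{n\ge 1}\tfrac{1}{n}\iint G_n(\omega,x)\,d\theta_\ep^\NN(\omega)\,d\mu_\ep(x).
\]
It therefore suffices to show that $\iint G_N\,d\theta_\ep^\NN\,d\mu_\ep<0$ for all sufficiently small $\ep>0$.

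Assume by contradiction this fails along some $\ep_k\downarrow 0$ with aces measures $\mu_{\ep_k}$. Compactness of Borel probabilities in the weak$^*$ topology and stochastic stability produce, up to subsequence, a limit $\mu^*\in\mathrm{conv}\{\mu_1,\ldots,\mu_p\}$. The proof is completed by a two-term triangle inequality for $\iint G_N\,d\theta_{\ep_k}^\NN\,d\mu_{\ep_k}-\int g_N\,d\mu^*$: the noise average $\int G_N(\omega,\cdot)\,d\theta_{\ep_k}^\NN(\omega)$ converges to $g_N$ uniformly on $M$, since $G_N$ depends only on the first $N$ coordinates of $\omega$, is jointly continuous on $T^N\times M$ by continuity of $\Phi$ into $C^2(M,M)$, and $\supp(\theta_\ep)^N$ shrinks to $\{(\tau^*,\ldots,\tau^*)\}$; and the weak$^*$ convergence $\mu_{\ep_k}\to\mu^*$ against the continuous bounded function $g_N$ disposes of the remaining term. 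The limit is bounded above by $-N\delta<0$, contradicting the assumed failure.

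The principal obstacle I foresee is the joint-continuity and uniform-in-$x$ control of $G_N$, which is exactly where the local diffeomorphism hypothesis is decisive: invertibility of $Df(x)$ everywhere on the compact manifold $M$ (and hence of $Df_t(x)$ for $t$ near $\tau^*$ by continuity of $\Phi$) ensures both that $g_N$ is a continuous bounded function and that the inverse cocycle varies continuously with the noise. These are precisely the features that fail in less regular settings, as in the one-dimensional quadratic-family example mentioned in the introduction, and without them neither the Kingman characterization of $\lambda^{\min}$ nor the weak$^*$ passage to the limit could be executed.
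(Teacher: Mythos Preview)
Your proposal is correct and follows the paper's own proof closely through the essential first phase: both arguments fix a common $N$ so that $\int\log\|(Df^N)^{-1}\|\,d\mu<-c<0$ for every $\mu$ in the convex hull of the $\mu_i$, then use weak$^*$ convergence of $\mu_\ep$ together with the uniform $C^1$ closeness of $f_\omega^N$ to $f^N$ (valid because $\supp(\theta_\ep)\to\{\tau^*\}$, $\Phi$ is continuous into $C^2$, and $f$ is a local diffeomorphism on a compact manifold) to conclude $\iint\log\|(Df_\omega^N)^{-1}\|\,d(\theta_\ep^\NN\times\mu_\ep)<0$ for small $\ep$. Your contradiction/compactness formulation of this step is in fact a bit cleaner than the paper's direct statement.

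The genuine difference is in the final passage from this integral inequality to positivity of all Lyapunov exponents. The paper invokes the $N$-ergodic decomposition of $\mu_\ep$ (Proposition~\ref{N decomposition}), applies Birkhoff's theorem on one component to produce a set of positive $(\theta_\ep^\NN\times\mu_\ep)$-measure where the exponents are positive, and then spreads this to full measure by $S$-ergodicity. You bypass all of that with a single application of Kingman's subadditive ergodic theorem to the cocycle $G_n(\omega,x)=\log\|Df_\omega^n(x)^{-1}\|$ over the ergodic system $(S,\theta_\ep^\NN\times\mu_\ep)$: since $-\lambda^{\min}(\mu_\ep)=\inf_n\tfrac{1}{n}\iint G_n\le\tfrac{1}{N}\iint G_N<0$, the conclusion is immediate. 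Your route is shorter and avoids the auxiliary machinery of power random perturbations and the $N$-ergodic decomposition; the paper's route, on the other hand, reuses Proposition~\ref{NUERO on A}, which is needed elsewhere anyway, so the marginal cost there is small. Both are valid, and your identification of the local diffeomorphism hypothesis as the hinge for the continuity/boundedness of $g_N$ and $G_N$ is exactly right.
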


From the previous results we can straightforward derive the following.

\begin{maincorollary}
Let \( f: M \to M \) be a \( C^{2} \) local diffeomorphism such that $f^n$ is topologically transitive for all $n\in\NN$ admitting an unique
 expanding ergodic absolutely continuous invariant   probability measure $\mu_0$ and consider a family $(\Phi,(\theta_\ep)_{\ep>0})$ of random perturbations and a family of aces measures $(\mu_\ep)_{\ep>0}$. If $\mu_\ep\to\mu_0$ in the weak$^*$ topology as $\ep\to0$ (stochastic stability), then for $\epsilon>0$ small enough $\mu_\ep$ is liftable.
\end{maincorollary}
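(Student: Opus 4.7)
The corollary is obtained by chaining Theorems~\ref{stabmain} and~\ref{liftmeasure}: the real content is packaged into those two results, so the task is mainly to verify that the hypotheses of the corollary feed them in the right order, and that the thresholds on $\ep$ can be taken compatibly.

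First I would apply Theorem~\ref{stabmain} with $p=1$ and $\mu_1=\mu_0$. The hypotheses are met: $f$ is a $C^2$ local diffeomorphism admitting a finite number (namely, one) of expanding ergodic invariant probability measures, and stochastic stability with respect to $(\Phi,(\theta_\ep)_{\ep>0})$ holds. A small point to check is that the weak$^*$ convergence $\mu_\ep\to\mu_0$ in the corollary coincides with the notion of stochastic stability used in the paper: since the only admissible limit measure is $\mu_0$, the convex hull reduces to $\{\mu_0\}$, so any accumulation point of absolutely continuous ergodic stationary measures necessarily agrees with $\mu_0$, and conversely the weak$^*$ convergence $\mu_\ep\to\mu_0$ supplies this. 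Theorem~\ref{stabmain} then yields $\ep_1>0$ such that for every $0<\ep<\ep_1$ the aces measure $\mu_\ep$ is expanding.

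Next, having promoted expansion from the deterministic setting to the random stationary setting, I would invoke Theorem~\ref{liftmeasure}. Its hypotheses — that $f$ be a $C^2$ local diffeomorphism with $f^n$ topologically transitive for every $n\in\NN$ and that $(\mu_\ep)_{\ep>0}$ be a family of expanding aces measures — are now in force on the range $0<\ep<\ep_1$. Topological transitivity of every iterate also ensures uniqueness of the aces measure at each small noise level, so the family $(\mu_\ep)_{\ep>0}$ is unambiguous. Theorem~\ref{liftmeasure} supplies an $\ep_2>0$ below which $\mu_\ep$ is liftable, and the conclusion of the corollary follows by taking $\ep<\min\{\ep_1,\ep_2\}$.

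No substantive obstacle arises beyond bookkeeping of the noise thresholds: Theorem~\ref{stabmain} does the work of transferring the expanding property, and Theorem~\ref{liftmeasure} (which internally uses Theorem~\ref{main} to produce a random GMY structure and then the construction sketched in items (i)--(v) of Section~\ref{s:main} to obtain the lift) delivers the projection/lift statement. If anything, the only point requiring a line of justification is the compatibility between the stochastic stability hypothesis from the corollary and the paper's definition, but this is transparent in the one-measure case.
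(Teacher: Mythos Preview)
Your proposal is correct and matches the paper's intended approach: the paper does not give an explicit proof of this corollary, stating only that it follows ``straightforward'' from the previous results, and your chaining of Theorem~\ref{stabmain} (to promote expansion to the stationary measures) followed by Theorem~\ref{liftmeasure} (to obtain liftability) is precisely the intended derivation. Your handling of the compatibility between the weak$^*$ convergence hypothesis and the paper's notion of stochastic stability, as well as the intersection of the two noise thresholds, is accurate bookkeeping.
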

\subsubsection{Statistical stability}

Observe that we do not have strong  hypotheses on the measure $\theta_\ep$ that selects the random maps. In particular, those measures can be supported on a single map $g^\ep$ in the $\ep$-$C^1$-neighborhood of $f$, that is,  $\theta_\ep$ is the Dirac measure $\delta_{g^\ep}$ supported on $g^\ep$.
The map $f$ is \emph{statistically stable} if it is stochastically stable with respect to  $(\Phi,(\delta_{g^\ep})_{\ep>0})$.

We recall that an $f$-invariant probability measure $\mu_f$ is an \emph{Sinai-Ruelle-Bowen (SRB)}
measure for $f$ if, for a positive Lebesgue measure set of points $x \in M$,
\begin{equation*}
\lim_{n\rightarrow +\infty}\frac{1}{n}\sum_{j=0}^{n-1}\varphi(f^j(x)) = \int \varphi d\mu,
\end{equation*}
for all continuous $\varphi:M\rightarrow\mathbb{R}$.

Thus if $f$ admits an unique SRB measure $\mu_f$ and the same holds for all maps $g$ in the $\ep$-$C^1$-neighborhood of $f$, admitting a unique SRB measure $\mu_g$, then $f$ is statistically stable if
$
g\mapsto \mu_g
$
is continuous at $f$ in the weak$^*$ topology. From Theorem~\ref{stabmain} we get the following result.

\begin{maincorollary}
If $\mathcal F\subset C^2(M,M)$ is a family of statistically stable local diffeomorphisms, then the subset of maps in $\mathcal F$ having all SRB measures expanding is open in~$\mathcal F$.
\end{maincorollary}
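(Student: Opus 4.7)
The plan is to deduce the corollary as a direct application of Theorem~\ref{stabmain} specialized to Dirac random perturbations. The essential observation is that when $\theta_\ep$ is the Dirac measure $\delta_g$ concentrated on a single diffeomorphism $g$, the stationary measures for $(\Phi,\theta_\ep)$ coincide with the $g$-invariant measures, so the absolutely continuous ergodic stationary measures are exactly the SRB measures of $g$, as defined in the paragraph introducing statistical stability.

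Fix $f\in\mathcal F$ such that every SRB measure of $f$ is expanding. Since $f\in\mathcal F$ is statistically stable, it is stochastically stable with respect to the family of Dirac perturbations $\theta_\ep=\delta_{g^\ep}$ with $g^\ep\to f$ in the $C^1$-topology. Theorem~\ref{stabmain}, once its hypotheses are verified for $f$ (see below), supplies a threshold $\ep_0>0$ such that for every $\ep<\ep_0$ every aces measure for $(\Phi,\theta_\ep)$ is expanding. Translated back via the specialization above, this says: for every $g\in\mathcal F$ with $\dist_{C^1}(f,g)<\ep_0$, every SRB measure of $g$ is expanding. Hence a whole $C^1$-neighborhood of $f$ in $\mathcal F$ is contained in the set of maps with all SRB measures expanding, proving that set open in $\mathcal F$.

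The one step requiring care is checking that $f$ satisfies the hypothesis of Theorem~\ref{stabmain}, namely that $f$ admits finitely many expanding ergodic invariant probability measures. This is supplied by the hypothesis ``all SRB measures of $f$ are expanding'' together with the identification of the reference list $\mu_1,\ldots,\mu_p$ provided by statistical stability with the SRB measures of $f$: each SRB measure of $f$ is itself an aces for the trivial perturbation $\delta_f$, hence a weak$^*$ accumulation point of aces, hence (being ergodic and therefore extremal in the convex hull) equal to one of the $\mu_i$. The hypothesis then forces every $\mu_i$ that arises as an SRB measure of $f$ to be expanding, which is exactly what is needed to apply Theorem~\ref{stabmain}.

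The main delicacy of the argument is thus purely bookkeeping in this last paragraph --- lining up the reference measures of statistical stability with the SRB measures of $f$ --- while the core content is just Theorem~\ref{stabmain} together with the fact that Dirac noise reduces random dynamics to deterministic dynamics and aces measures to SRB measures.
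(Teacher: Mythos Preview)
Your approach is correct and matches the paper's: the corollary is stated there as an immediate consequence of Theorem~\ref{stabmain} with no further argument, and your specialization to Dirac perturbations together with the identification of aces measures with SRB measures is exactly the intended mechanism.

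One small imprecision in your hypothesis-checking paragraph: you show that every SRB measure of $f$ equals some $\mu_i$ and then conclude that ``every $\mu_i$ that arises as an SRB measure of $f$'' is expanding, asserting this is ``exactly what is needed''. But Theorem~\ref{stabmain} as stated requires \emph{all} of the reference measures $\mu_1,\ldots,\mu_p$ from the stochastic-stability definition to be expanding, not only those that happen to coincide with SRB measures of $f$. The paper does not address this detail either. A clean patch is to observe, by inspecting the proof of Theorem~\ref{stabmain}, that only those $\mu_i$ carrying positive weight in the actual accumulation point $\mu_\infty$ need satisfy the integral bound~\eqref{eq:int log DfN-1 d mu_i<0}; equivalently, the non-canonical list $\mu_1,\ldots,\mu_p$ may be pruned to those that genuinely occur as ergodic components of some limit. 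In the unique-SRB setting the paper emphasizes just before the corollary, the issue does not arise at all since $p=1$ and $\mu_1=\mu_f$.
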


\subsubsection{Maps with critical sets}\label{se.quadratic}
The situation described above  is completely different when we consider maps which are no longer local diffeomorphisms, like maps with critical points.
This comprises the well-known family of one dimensional quadratic maps, where we can easily find a counterexample for our results if we do not assume the local invertibility.
Actually, let $f_a:[0,1]\to[0,1]$ be the family of quadratic maps, defined for $x\in [-1,1]$ and $a\in[0,2]$ as $$f_a(x)=1-ax^2.$$ It was proved by Benedicks and Carleson in \cite{BC85, BC91} that there is a set of positive Lebesgue measure $\mathcal A\subset [0,2]$  such that if $a\in\mathcal A$ then $f_a$ admits an (unique) absolutely continuous  invariant probability measure $\mu_a$, which is the SRB measure for $f_a$. Moreover, it follows from \cite[Theorem 3]{K90a} that $\mu_a$ is an expanding measure for each $a\in\mathcal A$.

On the other hand,
it was proved in \cite[Theorem A]{T01} that for each $a\in\mathcal A$ there is a sequence  $(a_n)_n$ of parameters in $[0,2]$ with $a_n\to a$  as $n\to\infty$ such  $f_{a_n}$ has an attracting periodic orbit for each $n$. This in particular implies that for each $a_n$ the SRB measure $\mu_{a_n}$ of $f_{a_n}$ is a singular measure supported on that attracting periodic orbit, thus having a negative Lyapunov exponent.

Fix now $a^*\in \mathcal A$ and a sequence $(a_n)_n$ as above converging to $a^*$. Define
 $$
 \mathcal F =\big\{ f_a : a\in\{a_n: n\in\NN\}\cup \{a^*\} \big\}.
 $$
Observing that $a^*$ is the only non isolated point and $\mu_{a_n}\to\mu_{a^*}$ in the weak* topology as $n\to\infty$, it easily  follows  that the family $\mathcal F$ is statistically stable. However,  $\mu_{a^*}$ is an ergodic expanding measure, but the measures $\mu_{a_n}$ all have negative Lyapunov exponent, for they are supported on attracting periodic orbits.

\section{Non-uniform expansion along random orbits}\label{s:Expanding implies NUERO}

In this section we prove that having a family of expanding aces measures implies some non-uniform expansion along random orbits generated by some power iterates of a random perturbation (Proposition~\ref{NUERO on A}). First we discuss an ergodic decomposition theorem and give a formalism to handle with power iterates of a random perturbation.

\subsection{Ergodic decomposition revisited}


We recall an ergodic decomposition theorem following \cite{Ki86}. Fix a random perturbation $(\Phi,\theta)$. Denote by $\mathbb{S}$ the set of all stationary measures in $M$ and by $\mathbb{S}_e$ the set of all ergodic stationary measures (w.r.t. $(\Phi,\theta)$). We endow $\mathbb S$ with a measurable structure $\mathcal S$ by considering a map $G(\eta)=\int g\,d\eta$ measurable if $g:M\to\RR_0^+$ is measurable with respect to  the completions of the Borel $\sigma$-algebra for any stationary measure.
\begin{Theorem}\cite[Theorem A.1.1.]{Ki86}
  The set $\mathbb S_e$ is a measurable subset of $\mathbb S$, and to each measure $\eta\in\mathbb S$ corresponds a unique probability measure $\nu_\eta$ on the measurable space $(\mathbb S,\mathcal S)$ such that $\eta$ can be uniquely represented as an integral
  \[
  \eta=\int_{\mathbb S_e}\rho\,d\nu_\eta.
  \]
\end{Theorem}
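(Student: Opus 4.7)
The plan is to reduce the statement to the classical ergodic decomposition theorem applied to the skew product $S$ on $T^{\NN}\x M$. By the correspondence recalled immediately after the definition of $S$, the map $\eta\mapsto \theta^{\NN}\x\eta$ is a bijection between the set $\mathbb{S}$ of stationary probability measures on $M$ and the set of $S$-invariant Borel probabilities on $T^{\NN}\x M$ of product form on the $T^{\NN}$-factor, and it transports ergodicity in both directions. Fix $\eta\in\mathbb{S}$. Since $M$ is a compact Riemannian manifold and $T$ is a metric space, $T^{\NN}\x M$ can be realized as a standard Borel space, so the classical ergodic decomposition yields a probability measure $\lambda$ on a measurable parameter space and a family $(m_\alpha)$ of $S$-ergodic probabilities such that
\[
\theta^{\NN}\x\eta=\int m_\alpha\,d\lambda(\alpha).
\]

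The crucial step is to upgrade each ergodic component $m_\alpha$ to a product measure of the form $\theta^{\NN}\x\rho_\alpha$ with $\rho_\alpha\in\mathbb{S}_e$. This is done by analysing the $\sigma$-algebra $\mathcal{I}_S$ of $S$-invariant Borel sets modulo $\theta^{\NN}\x\eta$. The one-sided Bernoulli shift $\sigma$ on $(T^{\NN},\theta^{\NN})$ is mixing, and a Kolmogorov zero--one argument shows that every $S$-invariant set $B\subset T^{\NN}\x M$ agrees, modulo a $\theta^{\NN}\x\eta$-null set, with a cylinder $T^{\NN}\x A$, where $A\subset M$ is a random invariant set in the sense defined in Section~\ref{s:Random perturbations}. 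It follows that the conditional expectation onto $\mathcal{I}_S$ depends only on the $x$-coordinate, so each ergodic component factors as $m_\alpha=\theta^{\NN}\x\rho_\alpha$ with $\rho_\alpha$ an ergodic stationary measure on $M$.

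Projecting the decomposition onto $M$ produces $\eta=\int\rho_\alpha\,d\lambda(\alpha)$, and pushing $\lambda$ forward via $\alpha\mapsto\rho_\alpha$ yields a probability measure $\nu_\eta$ on $(\mathbb{S},\mathcal{S})$ concentrated on $\mathbb{S}_e$ with
\[
\eta=\int_{\mathbb{S}_e}\rho\,d\nu_\eta(\rho).
\]
Measurability of $\mathbb{S}_e$ as a subset of $(\mathbb{S},\mathcal{S})$ follows from characterising ergodicity as the simultaneous vanishing of variances of ergodic averages applied to a countable dense family of bounded measurable functions; each such condition is measurable with respect to $\mathcal{S}$. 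Uniqueness of $\nu_\eta$ is inherited from the uniqueness of the ergodic decomposition of $\theta^{\NN}\x\eta$ together with the injectivity of $\rho\mapsto\theta^{\NN}\x\rho$.

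The principal obstacle is the factorization step: verifying that $S$-invariant Borel subsets of $T^{\NN}\x M$ reduce, up to nullsets, to cylinders over random invariant subsets of $M$. This is where the i.i.d.\ (Bernoulli) structure of the noise is indispensable, and where one must be careful to match the deterministic invariance under individual fibre maps $f_t$ coming from $S$-invariance with the correct notion of random invariance with respect to $\theta$ used to define ergodicity of stationary measures.
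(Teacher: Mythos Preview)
The paper does not supply its own proof of this statement: it is quoted as Theorem~A.1.1 of Kifer~\cite{Ki86} and used as a black box, so there is nothing in the paper to compare your argument against directly.

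Your outline is nonetheless the standard route to this result, and is in fact the mechanism underlying the paper's Corollary~\ref{Remark A.1.2. KI86} (which the paper also cites rather than proves). The reduction to the ergodic decomposition of $\theta^{\NN}\times\eta$ under $S$, followed by the observation that the components must be product measures, is exactly how Kifer proceeds. You are right that the factorization step is where all the content lies. Your phrase ``a Kolmogorov zero--one argument'' points in the correct direction but is not yet a proof: one clean way to fill it in is to note that any bounded $S$-invariant $\phi$ satisfies $\phi(\omega,x)=\phi(\sigma^n\omega,f_\omega^n(x))$ for every $n$, and that under $\theta^{\NN}\times\eta$ the random element $(\sigma^n\omega,f_\omega^n(x))$ is independent of $(\omega_1,\dots,\omega_n)$ and has the same law as $(\omega,x)$; a reverse-martingale argument against the tail $\sigma$-algebra of the Bernoulli shift then forces $\phi$ to be $\theta^{\NN}$-a.s.\ independent of $\omega$. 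Once $\mathcal I_S$ is identified mod $(\theta^{\NN}\times\eta)$-null sets with $\{\emptyset,T^{\NN}\}\otimes\mathcal B_0$ for some sub-$\sigma$-algebra $\mathcal B_0\subset\mathcal B$, the conditional measures of the product $\theta^{\NN}\times\eta$ with respect to $\mathcal I_S$ are automatically of the form $\theta^{\NN}\times\rho$, which is what you need. Your treatment of measurability of $\mathbb S_e$ and of uniqueness of $\nu_\eta$ is adequate as a sketch.
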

  The formula above means that for any Borel $B\subset M$ we have
  \[
  \eta(B)=\int_{\mathbb S_e}\rho(B)\,d\nu_\eta(\rho),
  \]
  where $\nu_\eta$ is concentrated on $\mathbb S_e$, or equivalently
  \[
  \int_M \varphi(x)\,d\eta(x)=\int_{\mathbb S_e}\int_M\varphi(x)d\rho(x)d\nu_\eta(\rho)
  \]
  for any bounded measurable function $\varphi\colon M\to\RR$.
Denote by $\mathcal P_e$ the set of $S$-ergodic probability measures on $ T^{\NN}\times M$. Note that $\{\theta^\NN\x\rho\colon\rho\in\mathbb{S}_e\}\subset\mathcal P_e$. From the uniqueness of the ergodic decomposition we may establish a particular decomposition for product measures $\theta^\NN\x\eta$, where $\eta$ is a stationary measure; see  \cite[Remark A.1.2.]{Ki86}.

\begin{Corollary}\label{Remark A.1.2. KI86}
  If $\eta$ has ergodic decomposition $\eta=\int_{\mathbb S_e}\rho\,d\nu_\eta$ then
  $\theta^\NN\times\eta$ has the ergodic decomposition
  \[
  \theta^\NN\x\eta=\int_{\mathcal P_e}\hat\rho\,d\hat\nu_{\theta^\NN\x\eta}(\hat\rho)=\int_{\mathbb S_e}\theta^\NN\times\rho\,d\nu_{\eta}(\rho).
  \]

\end{Corollary}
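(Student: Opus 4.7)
The plan is to build the candidate decomposition by hand from the given ergodic decomposition of $\eta$, verify it equals $\theta^\NN\times\eta$ by Fubini, observe its components are $S$-ergodic by the equivalence already recalled in the excerpt (a stationary measure $\rho$ is ergodic iff $\theta^\NN\times\rho$ is $S$-ergodic), and then invoke uniqueness of the ergodic decomposition to identify $\hat\nu_{\theta^\NN\x\eta}$ with the pushforward of $\nu_\eta$ under the lift $\rho\mapsto\theta^\NN\times\rho$.

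Concretely, first I would consider the measure
\[
\mu=\int_{\mathbb S_e}\theta^\NN\times\rho\,d\nu_\eta(\rho),
\]
defined on the Borel $\sigma$-algebra of $T^\NN\times M$ by $\mu(C)=\int_{\mathbb S_e}(\theta^\NN\x\rho)(C)\,d\nu_\eta(\rho)$. For this to make sense one must check that $\rho\mapsto(\theta^\NN\x\rho)(C)$ is measurable with respect to the $\sigma$-algebra $\mathcal S$ from the theorem; a monotone class argument reduces this to measurable rectangles $A\x B$, where the integrand becomes $\theta^\NN(A)\cdot\rho(B)$, manifestly measurable in $\rho$ by the very definition of $\mathcal S$.

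Second, I would check $\mu=\theta^\NN\x\eta$. On a rectangle,
\[
\mu(A\x B)=\theta^\NN(A)\int_{\mathbb S_e}\rho(B)\,d\nu_\eta(\rho)=\theta^\NN(A)\,\eta(B)=(\theta^\NN\x\eta)(A\x B),
\]
using the ergodic decomposition of $\eta$; equality on rectangles forces equality on the full product $\sigma$-algebra. Since each $\theta^\NN\x\rho$ with $\rho\in\mathbb S_e$ is $S$-invariant and $S$-ergodic (by the cited characterization from \cite{O83}), the formula exhibits $\theta^\NN\x\eta$ as a convex integral of $S$-ergodic probability measures.

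Finally, the map $\iota:\mathbb S_e\to\mathcal P_e$, $\iota(\rho)=\theta^\NN\x\rho$, is injective (projection to $M$ recovers $\rho$), and the above formula rewrites as
\[
\theta^\NN\x\eta=\int_{\mathcal P_e}\hat\rho\,d(\iota_*\nu_\eta)(\hat\rho).
\]
Uniqueness of the ergodic decomposition of the $S$-invariant probability measure $\theta^\NN\x\eta$, applied to the skew product $(T^\NN\x M, S)$, yields $\hat\nu_{\theta^\NN\x\eta}=\iota_*\nu_\eta$, which is the content of the corollary. The only genuine subtlety I anticipate is the measurability step above; everything else is a direct consequence of Fubini combined with the equivalence between ergodicity of $\rho$ and of $\theta^\NN\x\rho$.
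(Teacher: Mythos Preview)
Your proposal is correct and follows exactly the approach the paper indicates: the paper does not give a proof of this corollary but simply remarks that it follows from the uniqueness of the ergodic decomposition (citing \cite[Remark~A.1.2]{Ki86}), which is precisely what you do---you build the candidate decomposition $\int_{\mathbb S_e}\theta^\NN\times\rho\,d\nu_\eta(\rho)$, check it equals $\theta^\NN\times\eta$, note the components are $S$-ergodic via the equivalence from \cite{O83}, and invoke uniqueness. Your writeup fills in the measurability and Fubini details that the paper leaves to the reference.
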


For our purposes, the case where the number of ergodic stationary measures is finite is of special interest. Assume that $(\Phi,\theta)$ admits only a finite number of ergodic stationary measures $\mathbb S_e=\{\mu_1,\ldots\,\mu_\ell\}$. Then each stationary measure $\eta$ belongs to the convex hull of $\mathbb S_e$ in the space of probability measures on $M$:
\[
\eta=\alpha_1^\eta\mu_1+\cdots+\alpha_\ell^\eta\mu_\ell,
\]
with $\alpha_i^\eta\ge0$, $i=1,\ldots,\ell$, and $\alpha_1^\eta+\cdots+\alpha_\ell^\eta=1$. Setting $$\nu_\eta=\alpha_1^\eta\delta_{\mu_1}+\cdots+\alpha_\ell^\eta\delta_{\mu_\ell},$$
we have
  \begin{align*}
  \theta^\NN\times\eta&=\alpha_1^\eta(\theta^\NN\x\mu_1)+\cdots+\alpha_\ell^\eta(\theta^\NN\x\mu_\ell)\\
  &=\int_{\mathbb S_e}\theta^\NN\x\rho\, d\nu_\eta(\rho),
  \end{align*}
which must be the ergodic decomposition of $\theta^\NN\x\eta$ due to the uniqueness of the measure $\nu_\eta$. Thus, any \emph{ergodic component} of $\theta^\NN\x\eta$ should be a product measure $\theta^\NN\x\rho$, for some ergodic stationary measure $\rho$.

\subsection{Power random perturbations}\label{power}
Given a random perturbation $(\Phi,\theta)$ we introduce the random perturbation  $(\Phi_N,\theta^N)$, for some $N\in\NN$ as follows. We consider the product measurable spaces
$(T^N,\mathcal T^N,\theta^N)$ and $((T^N)^\NN,(\mathcal T^N)^{\mathbb N},(\theta^N)^\NN)=(T_N^\NN,\mathcal F_N,\theta_N^\NN)$,
and the left shift $\tilde\sigma\colon T^{\NN}_N\to T^{\NN}_N$. There is a natural relation between $ T^{\NN}_N$ and $ T^{\NN}$, by identifying $$\w=(\w_1,\w_2,\ldots,\w_{N},\w_{N+1},\ldots)\in T^{\NN}$$ with $$\tilde\w=(\tilde\w_1,\tilde\w_2,\ldots)\in T_N^\NN,\quad\text{ where }\tilde\w_i=(\w_{1+N(i-1)},\w_1,\ldots,\w_{Ni}),$$ for all $i=1,2,\ldots$. In this case we write $\pi_N(\tilde{\w})=\w$. We endow $T^N$ with the product \emph{sup} metric and consider the continuous map
$$
 \Phi_N
: T^N \longrightarrow  C^2(M,M)$$
such that, for $\tau=(t_1,\ldots,t_{N})\in T^N$ we define
$$\Phi_N(\tau)= f_{\tau}=\Phi(t_{N})\circ\cdots\circ \Phi(t_1)=f_{t_{N}}\circ\cdots\circ f_{t_1}.
 $$
We are now interested in the random orbits $(f_{\tilde\omega}^n(x))_{n\ge0}$ of the random perturbation $(\Phi_N,\theta^N)$ of $f^N$, where $f_{\tilde\w}=f_{\tilde\w_1}$ for all $\tilde\w=(\tilde\w_1,\tilde\w_2,\ldots)\in T_N^\NN$. With the previous identification, the random orbits for $(\Phi_N,\theta^N)$ are generated by the $N$-powers of the dynamics of $(\Phi,\theta)$, in the sense that $f_{\tilde\w}^n(x)=f_{\w}^{Nn}(x)$. In particular, $f_{\tilde\w}=f_\w^N$.

A Borel probability measure $\mu_{N}$ will be called a \emph{$N$-stationary measure} if it is a stationary measure with respect to the random perturbation $(\Phi_N,\theta^N)$. An $N$-stationary measure $\mu_{N}$ is \emph{$N$-ergodic} if for every $N$-random invariant set $A$ (meaning random invariant with respect to  $(\Phi_N,\theta^N)$) we have $\mu_{N}(A)=0$ or $\mu_{N}(A)=1$. For the skew-product $S_N: T^{\NN}_N\x M\to  T^{\NN}_N\x M$ we have the identification $S_N(\tilde\w,x)=(\tilde\sigma(\tilde\w),f_{\tilde\w}(x))= (\sigma^N(\w),f_{\w}^{N}(x))=S^N(\w,x)$. In view of this,  we have that   $\mu_{N}$ is a $N$-stationary probability measure if and only if $\theta_N^\NN\times\mu_{N}$ is $S^N$-invariant, and we also have that an $N$-stationary measure $\mu_{N}$ is \emph{$N$-ergodic}  if and only if $\theta_N^\NN\x\mu_{N}$ is $S^N$-ergodic.

It is straightforward to check that if $\mu$ is a stationary measure then it is also an $N$-stationary measure, but the converse is not true in general.
We notice moreover that the ergodic stationary measures are not necessarily $N$-ergodic measures. However, we can have a suitable decomposition of an ergodic stationary measure into $N$-ergodic components.

\begin{Proposition}\label{N decomposition}
  Let $\mu$ be an ergodic stationary measure w.r.t. $(\Phi,\theta)$.  Given $N\ge 1$, there are $k\in\{1,\ldots,N\}$ and $N$-random invariant sets $C_1,\dots,C_k$ such that:
  \begin{enumerate}
  \item $\{C_1,\dots,C_k\}$ is a partition ($\mu$-mod 0) of $M$.
  \item $\mu(C_j)\ge 1/N$ for each $1\le j\le k$.
    \item $\mu\vert C_j$ is $N$-ergodic for each $1\le j\le k$.
  \end{enumerate}
\end{Proposition}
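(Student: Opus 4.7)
The plan is to apply the ergodic decomposition of Corollary~\ref{Remark A.1.2. KI86} to $\mu$ viewed as an $N$-stationary measure, then exploit the fact that the one-step shift $S$ permutes the resulting $N$-ergodic components through orbits of length at most $N$, and finally identify the $C_j$ with the sets of points that are generic for each of the components.

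First note that $\mu$ is automatically $N$-stationary, since $S$-invariance of $\theta^\NN\x\mu$ implies $S^N$-invariance. Applying Corollary~\ref{Remark A.1.2. KI86} to the random perturbation $(\Phi_N,\theta^N)$ yields an $S^N$-ergodic decomposition
\[
\theta^\NN\x\mu=\int\theta^\NN\x\rho\,d\nu_\mu(\rho),
\]
where the integral is taken over the set of $N$-ergodic stationary measures. Because $S$ commutes with $S^N$, for each $N$-ergodic $\rho$ the pushforward $S_*(\theta^\NN\x\rho)$ is again $S^N$-invariant and $S^N$-ergodic, so by uniqueness of the $S^N$-ergodic decomposition of the $S$-invariant measure $\theta^\NN\x\mu$ (together with the product form provided by Corollary~\ref{Remark A.1.2. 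KI86}), it must coincide with $\theta^\NN\x\tilde S(\rho)$ for some $N$-ergodic stationary $\tilde S(\rho)$. This defines a measurable map $\tilde S$ on $\supp(\nu_\mu)$, which satisfies $\tilde S^N=\mathrm{id}$ since $(S^N)_*(\theta^\NN\x\rho)=\theta^\NN\x\rho$.

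The $S$-ergodicity of $\theta^\NN\x\mu$ forces $\nu_\mu$ to be $\tilde S$-ergodic: any $\tilde S$-invariant Borel subset $E$ of $\supp(\nu_\mu)$ pulls back under the component map $(\w,x)\mapsto\rho$ to an $S$-invariant Borel set of $(\theta^\NN\x\mu)$-measure equal to $\nu_\mu(E)$, hence $0$ or $1$. Since $\tilde S$ has order dividing $N$, its orbits are finite of length at most $N$, so $\tilde S$-ergodicity forces $\nu_\mu$ to be the normalized counting measure on a single orbit $\{\rho_1,\ldots,\rho_k\}$ with $1\le k\le N$. Therefore $\mu=\frac{1}{k}(\rho_1+\cdots+\rho_k)$.

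To extract the sets $C_j$, fix a countable dense subset $\mathcal D\subset C^0(M,\RR)$ and let $C_j$ be the set of $x\in M$ such that, for $\theta^\NN$-a.e.\ $\w$, the averages $\frac{1}{n}\sum_{i=0}^{n-1}\varphi(f_\w^{Ni}(x))$ converge to $\int\varphi\,d\rho_j$ as $n\to\infty$, for every $\varphi\in\mathcal D$. Applying the random Birkhoff theorem to the $S^N$-ergodic measure $\theta^\NN\x\rho_j$ gives $\rho_j(C_j)=1$, while $\rho_i(C_j)=0$ for $i\neq j$ since distinct $\rho_\ell$ produce distinct limit functionals. Hence the $C_j$ are pairwise disjoint, $\mu(C_j)=1/k\ge 1/N$, and $\{C_1,\ldots,C_k\}$ is a partition of $M$ modulo a $\mu$-null set. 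A standard one-step argument (prepending a single term to a Birkhoff sum does not affect its limit) shows that each $C_j$ is $N$-random invariant, and the normalization $\mu|C_j/\mu(C_j)=\rho_j$ is $N$-ergodic by construction. The delicate point is showing that $S$ descends to a well-defined $\nu_\mu$-ergodic action $\tilde S$ on the space of $N$-ergodic components; once this is in place the bound $k\le N$ and the identification of the $C_j$'s follow by routine arguments.
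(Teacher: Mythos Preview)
Your proof is correct and arrives at the same conclusion, but the route differs from the paper's. The paper outsources the key ergodic-theoretic step: it applies \cite[Lemma~2.5]{ADL13} directly to the skew product $(S,\theta^\NN\times\mu)$, obtaining $S^N$-invariant sets $A_1,\dots,A_k\subset T^\NN\times M$ (with $k\le N$ and each of mass $\ge 1/N$) on which $(\theta^\NN\times\mu)|A_j$ is $S^N$-ergodic; only then does it invoke Corollary~\ref{Remark A.1.2. KI86} to conclude that each $(\theta^\NN\times\mu)|A_j$ is a product $\theta^\NN\times\rho_j$, setting $C_j=\supp(\rho_j)$. You instead start from the product-form $N$-ergodic decomposition given by Corollary~\ref{Remark A.1.2. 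KI86} and then essentially reprove the content of the cited lemma by hand, via the induced action $\tilde S$ on components and the observation that $\tilde S^N=\mathrm{id}$ forces $\nu_\mu$ to be uniform on a single orbit of length $k\le N$. Your approach is more self-contained (no external black box), while the paper's is shorter. Your construction of the $C_j$ as sets of Birkhoff-generic points is also slightly different from, and arguably cleaner than, the paper's identification $C_j=\supp(\rho_j)$, which tacitly uses that the supports of the mutually singular $\rho_j$ can be taken disjoint $\mu$-mod~$0$.
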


We call $(C_j,\mu\vert C_j)$ the \emph{$N$-ergodic components} of $\mu$.
\begin{proof}
Let $N\ge1$ be given. Applying \cite[Lemma 2.5]{ADL13} to $\theta^\NN\x\mu$ and $S$ we get
$S^N$-invariant Borel sets $A_1,\dots,A_k\subset T^{\NN}\x M$ such that:
  \begin{enumerate}
  \item $\{A_1,\dots,A_k\}$ is a $\theta^\NN\x\mu$ mod 0 partition  of $ T^{\NN}\x M$.
  \item $(\theta^\NN\x\mu)(A_j)\ge 1/N$ for each $1\le j\le k$.
  \item $(S^N,(\theta^\NN\x\mu)\vert A_j)$ is ergodic for each $1\le j\le k$.
  \end{enumerate}
Since for $(\theta^\NN\x\mu)$-mod 0 we have $A_j=\supp((\theta^\NN\x\mu)\vert A_j)$, $j=1,\ldots,k$, we may consider $A_j=\supp((\theta^\NN\x\mu)\vert A_j)$. Since measures $(\theta^\NN\x\mu)\vert A_j$ provide an ($S^N$-) ergodic decomposition for $\theta^\NN\times\mu$, from Corollary~\ref{Remark A.1.2. KI86} we get that this measures $(\theta^\NN\x\mu)\vert A_j$ should be indeed product measures $\theta^    \NN\x\rho_j$, for some $N$-ergodic stationary measures $\rho_j$. We consider then each $C_j$ as $\supp(\rho_j)$, for all $1\le j\le k$.
This implies that
\[\supp((\theta^\NN\x\mu)\vert A_j)=\supp(\theta^\NN\x\rho_j)=\supp(\theta^\NN)\x C_j,\]
and thus
\[
(\theta^\NN\x\mu)\vert A_j = \theta^\NN\x(\mu\vert C_j),
\]
for all $1\le j\le k$. Hence
$\theta^\NN\x(\mu\vert C_j)$
is $S^N$-ergodic and so $(\mu\vert C_j)$ is $N$-ergodic for each $1\le j\le k$. Clearly, $\{C_1,\dots,C_k\}$ is a $\mu$ mod 0 partition  of $M$ and $\mu(C_j)\ge 1/N$ for each $1\le j\le k$.
\end{proof}

\subsection{Expanding stationary measures imply NUERO}\label{exp to NUERO to GMY}
  We say that a family of random perturbations  $(\Phi,(\theta_\ep)_{\ep>0})$  is
\emph{non-uniformly expanding on random orbits} (NUERO) on a set $A\subset M$ if, at
least for sufficiently  small $\ep>0$,  there is ${a_0}>0$ such that for
$\theta_\ep^\NN\times (\leb\vert A)$-a.e.  $(\omega,x)\in T^\NN\times M$
 \begin{equation} \label{NUEOA}
\liminf_{n\to +\infty}\frac{1}{n}
\sum_{j=0}^{n-1}\log\|Df_{\sigma^j(\w)}(f_{\omega}^j(x))^{-1}\|< -{a_0}.
 \end{equation}

The following result is the random counerpart of \cite[Lemma 2.3]{ADL13}.

\begin{Lemma}\label{lem:neg}
 Let \( \mu_\ep \)
    be an expanding aces measure.
There is $c>0$ such that for all sufficiently large \( N \)
\begin{equation}\label{neg}
\int \log \|(Df_\w^{N})^{-1}\| \, d(\theta_\ep^\NN\x\mu_\ep) <-c < 0.
\end{equation}
\end{Lemma}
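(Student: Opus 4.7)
The plan is to view $a_N(\omega,x) := \log \|(Df_\omega^N(x))^{-1}\|$ as a subadditive cocycle over the skew-product $S$ with respect to the $S$-invariant ergodic measure $\theta_\ep^\NN \x \mu_\ep$, and to combine Kingman's subadditive ergodic theorem with the random Oseledets Multiplicative Ergodic Theorem from \cite{LQ95} to identify the asymptotic growth rate of $a_N$ as $-\lambda_{\min}$, where $\lambda_{\min}$ denotes the smallest Lyapunov exponent of $\mu_\ep$. The expanding hypothesis on $\mu_\ep$ is exactly $\lambda_{\min}>0$, which supplies the negative sign claimed in the lemma.

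Concretely, I would first verify the subadditivity inequality $a_{N+M}(\omega,x) \le a_N(\omega,x) + a_M(S^N(\omega,x))$; this is immediate from the chain rule $Df_\omega^{N+M}(x) = Df_{\sigma^N\omega}^M(f_\omega^N(x)) \circ Df_\omega^N(x)$ by taking inverses and applying submultiplicativity of the operator norm. For integrability, since the maps $f_\omega$ with $\omega_1 \in \supp(\theta_\ep)$ form a compact family of $C^2$ local diffeomorphisms on the compact manifold $M$, $\|(Df_\omega)^{-1}\|$ is uniformly bounded and hence $a_1 \in L^\infty(\theta_\ep^\NN\x\mu_\ep)$. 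Kingman's theorem then yields an $S$-invariant function $a_\infty$ such that $\frac{1}{N}a_N \to a_\infty$ pointwise a.e.\ and $\lim_N \frac{1}{N} \int a_N\, d(\theta_\ep^\NN \x \mu_\ep) = \int a_\infty\, d(\theta_\ep^\NN \x \mu_\ep)$; $S$-ergodicity of $\theta_\ep^\NN \x \mu_\ep$, equivalent to ergodicity of $\mu_\ep$, forces $a_\infty$ to be a.e.\ constant.

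To identify this constant, I would appeal to \cite[Theorem 3.2]{LQ95}: the derivative cocycle admits Lyapunov exponents $\lambda_1 \ge \cdots \ge \lambda_d = \lambda_{\min}$ for $(\theta_\ep^\NN \x \mu_\ep)$-a.e.\ $(\omega,x)$, and since $\|(Df_\omega^N(x))^{-1}\|$ equals the reciprocal of the smallest singular value of $Df_\omega^N(x)$, the standard singular-value reading of the MET gives $\frac{1}{N}a_N \to -\lambda_{\min}$ a.e. Expansiveness of $\mu_\ep$ forces $\lambda_{\min}>0$, so $\int a_N\, d(\theta_\ep^\NN \x \mu_\ep)$ grows like $-N\lambda_{\min}$ and tends to $-\infty$ linearly in $N$; in particular, for any fixed $c\in(0,\lambda_{\min})$ the inequality $\int a_N\, d(\theta_\ep^\NN \x \mu_\ep)<-c$ holds for all sufficiently large $N$. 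The only subtle point is reading off the Kingman limit as $-\lambda_{\min}$ through singular values rather than directly through an Oseledets filtration, but this is the standard identification of the smallest Lyapunov exponent with the slowest singular-value growth rate of the forward cocycle and presents no genuine obstacle.
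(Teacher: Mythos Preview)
Your argument is correct and shares the same starting point as the paper's proof: both identify the pointwise limit $\frac{1}{N}\log\|(Df_\omega^N(x))^{-1}\|\to -\lambda_{\min}$ via the subadditive ergodic theorem together with the random MET, and both exploit ergodicity of $\theta_\ep^\NN\times\mu_\ep$ to make this limit constant. The divergence comes in the passage from pointwise to integral information. You invoke the $L^1$-convergence built into Kingman's theorem directly, obtaining $\frac{1}{N}\int a_N\to -\lambda_{\min}$ in one stroke; this is clean and entirely legitimate here since $a_1\in L^\infty$ and the limit is a finite constant. The paper instead argues by hand: it splits the space into the ``bad'' set $B_N=\{a_N>-\lambda N/2\}$ and its complement, bounds the integral over the complement by $-\tfrac{\lambda}{3}N$, and controls the integral over $B_N$ via a Ces\`aro-type sublemma (quoted from \cite{ADL13}) combined with the chain-rule estimate $a_N\le\sum_{j=0}^{N-1}a_1\circ S^j$. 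Your route is shorter; the paper's decomposition is more explicit and yields the same linear decay $\int a_N\lesssim -\tfrac{\lambda}{3}N$ without appealing to the full $L^1$ statement of Kingman.
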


\begin{proof}
Set $\psi_N(\w,x)=\log\|Df_\w^N(x)\|$, and write $\psi=\psi_1$. Recall that the skew product $S$ is $(\theta_\ep\times\mu_\ep)$ invariant and notice that $\psi_{n+m}=\psi_n+\psi_m\circ S^n$. By the continuity of $\Phi$, the
subadditive ergodic theorem, and the positivity of all Lyapunov exponents,
there exists  \( \lambda>0\) such that for \( (\theta_\ep^\NN\x\mu_\ep) \)-a.e. \( (\w,x) \) we have
 \begin{equation}\label{eq.tag} 
  \lim_{n\to\infty} \frac 1n\log \|Df_\w^{n}(x)^{-1}\|=-\lambda.
  \end{equation}
  In fact this \(  \lambda  \) may be chosen precisely as the smallest Lyapunov exponent.
We define the sequence of sets
\[
B_{N}=\{(\w,x): \log \|Df_\w^{N}(x)^{-1}\|>{-\lambda N}/{2}\}.
 \]
We have that \( (\theta_\ep^\NN\times\mu_\ep)(B_{N})\to 0 \) as \( N\to \infty \) and since \((\theta_\ep^\NN\x\mu_\ep)\) is a probability measure
we have
\begin{equation}\label{limited0}
    \int_{M\setminus B_N}\log\|Df_\w^N(x)^{-1}\|d(\theta_\ep^\NN\x\mu_\ep) \le -\frac\lambda2
N(1-(\theta_\ep^\NN\x\mu_\ep)(B_N)) \leq -\frac \lambda 3 N.\end{equation}

It is therefore sufficient to prove that the integral over $B_N$ is not too large, despite the fact that the integrand is possibly increasing in \( N \). We shall use the following result; see \cite[Sublemma 2.4]{ADL13}.

\begin{Sublemma}\label{critical} Let $\psi \in L^1(\theta_\ep^\NN\x\mu_\ep)$ and let $ (B_n)_n$ be a
sequence of sets with $(\theta_\ep\times\mu_\ep)(B_n)\rightarrow 0$ as $n\to\infty$. Then
$$\frac{1}{n}\sum_{j=0}^{n-1}\int_{B_n}\psi\circ S^j \,d(\theta_\ep^\NN\times\mu_\ep) \to 0, \text{ as $n \to\infty$}.$$
\end{Sublemma}

Returning to the proof of the Lemma,
by the chain rule we have
\begin{equation}\label{limited}
\int_{B_N}\log\|Df_\w^N(x)^{-1}\|d(\theta_\ep^\NN\times\mu_\ep)\leq
\sum_{j=0}^{N-1}\int_{B_N}\log\|Df_{\sigma^j(\w)}(f_\w^j(x))^{-1}\|d(\theta_\ep^\NN\times\mu_\ep)
=: N b_N. 
\end{equation}
By Sublemma~\ref{critical} we get that  $b_N\rightarrow 0$ when $N\to\infty$.
%
%
Therefore we obtain \eqref{neg} provided $N$ is sufficiently large.
\end{proof}

\begin{Proposition} \label{NUERO on A} Let \( f: M \to M \) be a \( C^{2} \) local diffeomorphism and consider a family of random perturbations \((\Phi,(\theta_\ep)_{\ep>0})\) admitting expanding aces measures  $(\mu_\ep)_{\ep>0}$. Then for small enough \(\ep>0\)
  and all $N$ large enough, $(\Phi_N,(\theta_\ep^N)_{\ep>0})$ is NUERO on a $N$-random invariant set $A$ with positive Lebesgue measure.


\end{Proposition}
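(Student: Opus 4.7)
The plan is to deduce NUERO from the Birkhoff ergodic theorem for the skew product $S^N$ on a suitable $N$-ergodic component of the expanding aces measure $\mu_\ep$, using Lemma~\ref{lem:neg} to guarantee a negative integral and Proposition~\ref{N decomposition} to extract that component. For $\ep>0$ small, fix $\mu_\ep$ and apply Lemma~\ref{lem:neg} to choose $N$ large so that $\int\log\|(Df_\w^N)^{-1}\|\,d(\theta_\ep^\NN\times\mu_\ep)<-c$ for some $c>0$. By Proposition~\ref{N decomposition}, there are $N$-random invariant sets $C_1,\dots,C_k$ partitioning $M$ ($\mu_\ep$-mod $0$) with $\mu_\ep(C_j)\ge1/N$, such that each $\mu_\ep^j:=\mu_\ep|C_j/\mu_\ep(C_j)$ is $N$-ergodic. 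Since the global integral is a convex combination of the integrals against $\theta_\ep^\NN\times\mu_\ep^j$, there exists an index $j^*$ for which $\int\log\|(Df_\w^N)^{-1}\|\,d(\theta_\ep^\NN\times\mu_\ep^{j^*})<-c$.

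Because $\mu_\ep^{j^*}$ is $N$-ergodic, the product $\theta_\ep^\NN\times\mu_\ep^{j^*}$ is $S^N$-ergodic. The function $(\w,x)\mapsto\log\|(Df_\w^N(x))^{-1}\|$ is bounded (since $f$ is a $C^2$ local diffeomorphism on a compact manifold and $\Phi$ is continuous), so Birkhoff's theorem gives, for $(\theta_\ep^\NN\times\mu_\ep^{j^*})$-a.e. $(\w,x)$,
\[
\lim_{n\to\infty}\frac{1}{n}\sum_{i=0}^{n-1}\log\|Df_{\sigma^{Ni}(\w)}^N(f_\w^{Ni}(x))^{-1}\|<-c.
\]
Under the identifications from Subsection~\ref{power}, namely $f_{\tilde\w}^n(x)=f_\w^{Nn}(x)$, this Birkhoff sum coincides exactly with the one appearing in the definition \eqref{NUEOA} of NUERO for the power perturbation $(\Phi_N,\theta_\ep^N)$, so NUERO holds at $(\theta_\ep^\NN\times\mu_\ep^{j^*})$-a.e. $(\w,x)$ with $a_0=c$.

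To conclude, take $A$ to be the essential support of $\mu_\ep^{j^*}$, that is a Lebesgue-a.e. representative of $\{h_{j^*}>0\}$ where $h_{j^*}=d\mu_\ep^{j^*}/d\leb$. Then $A\subset C_{j^*}$ inherits $N$-random invariance from $C_{j^*}$ (the condition off $\supp(\mu_\ep^{j^*})$ being vacuous), $\leb(A)>0$ by absolute continuity, and on $A$ the measures $\leb$ and $\mu_\ep^{j^*}$ are mutually absolutely continuous. The $\mu_\ep^{j^*}$-a.e. statement from the previous paragraph then upgrades to a $\leb|A$-a.e. one, which is exactly NUERO on $A$. I expect this measure-theoretic upgrade to be the main obstacle, since Birkhoff naturally produces convergence only $\mu_\ep^{j^*}$-a.e., whereas NUERO is a Lebesgue condition; restricting to the essential support of the absolutely continuous $N$-ergodic component simultaneously handles both the upgrade and the $N$-random invariance of $A$.
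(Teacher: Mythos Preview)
Your proposal is correct and follows essentially the same route as the paper: invoke Lemma~\ref{lem:neg} for the negative integral, use Proposition~\ref{N decomposition} to select an $N$-ergodic component on which the integral remains below $-c$, and then apply Birkhoff for $S^N$ to obtain the NUERO inequality almost everywhere. Your explicit passage from $\mu_\ep^{j^*}$-a.e.\ to $(\leb|A)$-a.e.\ by taking $A=\{h_{j^*}>0\}$ is in fact more careful than the paper's version, which simply takes $A$ to be the support $C_{j^*}$ from Proposition~\ref{N decomposition} and notes that $\mu_\ep\ll\leb$ gives $\leb(A)>0$, leaving the equivalence of the two ``almost everywhere'' statements implicit.
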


\dem
From Lemma~\ref{lem:neg} one knows that for all large enough $N$ we have
   \begin{equation*}
     \int\log\|(Df_\w^N(x))^{-1}\|\,d(\theta_\ep^\NN\times\mu_\ep)<-c<0,
    \end{equation*}
for some $c>0$.
From Proposition~\ref{N decomposition}, there must be at least one $N$-ergodic component $(A,\rho_\ep)$ of $\mu_\ep$, where $\rho_\ep=\mu_\ep\vert A$ for some $A\subset M$ with $\mu_\ep(A)\ge1/N$, such that
\begin{equation*}
    \int \log \|(Df_\w^N(x))^{-1}d(\theta_\ep^{\NN}\times\rho_\ep)\|<-c<0,
\end{equation*}
or, equivalentelly,\begin{equation}\label{eq:int Df_w^N -1 drho<0}
     \int\log\|(Df_{\tilde\w}(x))^{-1}\|\,d((\theta_\ep^N)^\NN\times\rho_\ep)<-c<0.
    \end{equation}
    We write $\tilde\psi(\tilde\w,x)=\psi_N(\w,x)$. By Birkhoff's ergodic theorem we have for $((\theta_\ep^N)^\NN\times\rho_\ep)$-a.e. $(\tilde\w,x) \in T_N^{\NN} \times A$
    \begin{align*}
      \lim_{n\to\infty}\frac1n\sum_{j=0}^{n-1} (\tilde\psi\circ (S_{N})^j)(\tilde\w,x) &
            =  \lim_{n\to\infty}\frac1n\sum_{j=0}^{n-1} \log \|(Df_{\tilde\sigma^{j}(\tilde\w)}(f_{\tilde\w}^{j }(x)))^{-1}\|\\
      & = \int\log\|(Df_{\tilde\w}(x))^{-1}\|\,d((\theta_\ep^N)^\NN\times\rho_\ep)\\
      & <-c<0.
    \end{align*}
This means that $(\Phi_N,\theta_\ep^N)$ satisfies the NUERO condition~\eqref{NUEOA} on a set $A \subset M$ with $\mu_\ep(A)>1/N$. Since $\mu_\ep$ is absolutely continuous with respect to $\leb$, we also have $\leb(A)>0$.
\cqd

\section{NUERO implies GMY}\label{s:NUERO _GMY}

Proposition~\ref{NUERO on A} allows us to reduce the proof of the Theorem~\ref{main} to the proof of the following.
\cpr\label{random structure}
 Let $f:M\to M$ be $C^2$ map such that $f$ is topologically transitive  in a random invariant subset $A \subset M$ with positive Lebesgue measure and $(\Phi,(\theta_\ep)_{\ep>0})$ is NUERO in $A$.
 Then there is some ball $\Delta\subset A$ such that if
$\epsilon>0$ is small enough then $(\Phi,\theta_\ep)$ induces a random GMY structure.
\fpr

Indeed, under the assumptions of Theorem~\ref{main}, Proposition~\ref{NUERO on A} gives that for all $N$ large enough, $(\Phi_N,(\theta_\ep^N)_{\ep>0})$ is NUERO on a $N$-random invariant set $A$. Fix such an $N$. By Proposition ~\ref{random structure} we get that  $(\Phi_N,\theta_\ep^N)$ induces a random GMY structure in $\Delta$  with return time $\tilde\cR\colon T_N^\NN\x\Delta\to\ZZ$, provided $\epsilon>0$ is small enough. In particular for $(\theta_\ep^N)^\NN$-a.e. $\tilde\w\in T_N^\NN$ we get a partition $\mathcal{P}_{\tilde\w}$ of $\Delta$ such that the map ${F}_{\tilde\w}=f_{\tilde\w}^{\tilde\cR}$ satisfy the conditions in Definition~\ref{def GMY}. Taking into account the identification of $T^\NN$ and $T_N^\NN$ given in Section~\ref{power}, for $\theta_\ep^\NN$-a.e.  $\w\in T^\NN$ we define a partition $\mathcal{P}_\w$ by taking the same elements of the corresponding $\mathcal{P}_{\tilde\w}$ and setting the return time $\cR\colon T^\NN\x\Delta\to\NN$ as $\cR(\w,x)=N\cdot\tilde\cR(\tilde\w,x)$.


We devote the remaining of this section to the proof of Proposition~\ref{random structure}. Assume that  $f$ is topologically transitive in a random invariant subset $A \subset M$ with positive Lebesgue measure and $(\Phi,(\theta_\ep)_{\ep>0})$ is NUERO in $A$.
First, we introduce the hyperbolic times, main tool for the construction of a random GMY structure and some results about them.

\subsection{Hyperbolic times}
In this subsection  we introduce the notion of hyperbolic time in the random setting and recall some of its main features. For the proofs of the results below see~\cite[Section 2]{AAr03} or \cite[Section 4.1]{AV13}.
\cd
For $0<\lambda<1$, we call $n$ a
{\em $\lambda$-hyperbolic time} for  $(\omega, x)\in T^{\NN}\times M$ if
$$
\prod_{j=n-k+1}^{n}\|Df_{\sigma^j(\omega)}(f^j_{\omega}(x))^{-1}\| \le \lambda^k,
$$
for all $1\le k \le n$.
\fd

 Given $\omega\in T^{\NN}$ and $n\ge 1$, we define
 $$
 H^n_{\omega}=\{x\in M\colon \text{ $n$ is a $\lambda$-hyperbolic time for
 $(\omega, x)$}\}.
 $$
  Notice that if $x \in H_{\w}^j$ for $j\in\NN$, then $f_{\w}^i(x) \in H_{\sigma^i(\w)}^\ell$ for any $1\le i<j$  and
$\ell=j-i$.

\cd\label{de.freq} We say that the {\em frequency of
$\lambda$-hyperbolic times}\index{frequency} for
$(\omega,x)\in T^{\NN}\times M$ is larger than
$\zeta>0$ if, for large $n\in\NN$, there are $\ell \ge\zeta
n$ and integers $1\le n_1<n_2\dots <n_\ell\le n$ which are
$\lambda$-hyperbolic times for $(\omega,x)$, i.e,
$$
\limsup_{n\to +\infty}\frac{1}{n}\# \{1\leq j \leq n: x \in  H^j_{\omega} \}\geq\zeta.
$$
 \fd

\cpr\label{pr.hyperbolic1}
Let $\epsilon >0$ small enough. Assume that $(\Phi,(\theta_\ep)_{\ep>0})$ is NUERO  in a random invariant subset $A \subset M$ with positive Lebesgue measure. Then there are $0<\lambda<1$ and $\zeta>0$
(depending only on ${a_0}$ in \eqref{NUEOA} and on the map $f$)
such that, if $\ep>0$ is small enough, for $\theta_\ep^\NN$-a.e.
$\omega\in T^\NN$ and Lebesgue almost every point $x\in A$, the
frequency of $\lambda$-hyperbolic times
for $(\w,x)$ is larger than $\zeta$. \fpr



Next we present some properties of hyperbolic times that will be useful later.

\cle\label{p.contr} 
There exist $\delta_1,C_0 >0$ such that if $n$ is
    $\lambda$-hyperbolic time for $(\omega,x)\in T^\NN\times M$,
    then there is a neighborhood $V^n_\omega$ of $x$ in $M$ such that:
     \begin{enumerate}
     \item $f_{\omega}^n$ maps $V^n_\omega$
    diffeomorphically onto $B(f_{\omega}^n(x),{\delta_1})$;
\item for every $y,z\in V^n_\omega$ and $1\leq k\leq n$
     $$ \dist(f_{\omega}^{n-k}(y),f_{\omega}^{n-k}(z)) \leq \lambda^{k/2}\dist(f_{\omega}^{n}(y),f_{\omega}^{n}(z)).
     $$
      \item for every $y, z\in V^n_\omega$,
 $$ \log\frac{|\det Df_{\omega}^n (y)|}{|\det Df_{\omega}^n (z)|}
 \le C_0\dist(f_{\omega}^{n}(y),f_{\omega}^{n}(z)).
 $$
      \end{enumerate}
\fle

Moreover, for every $y\in V^n_{\omega}(x)$ we have $\|Df^n_{\omega}(y)^{-1}\|\le\lambda^{n/2}$.
We refer for those sets $ V^n_\omega$ as \emph{hyperbolic
pre-balls} and  $ B(f^{n}_{\omega}(x),\delta_1) $
as \emph{hyperbolic balls}.

\cle \label{co.contraction} %
There is $C_2>0$ such that if $n$ is a $\lambda$-hyperbolic time for $(\w,x)$ and $V_\w^n$ is the corresponding hyperbolic pre-ball, then:
\begin{enumerate}
\item  for any
Borel sets $A_1, A_2\subset V_\w^n$
$$\frac{1}{C_2}\frac{\leb(A_1)}{\leb(A_2)}\leq\frac{\leb(f_\w^n(A_1))}{
\leb(f_\w^n(A_2))}\leq{C_2}\frac{\leb(A_1)}{\leb(A_2)}.$$
 \item there is  $\tau_\w^n >0$ such that for any $x\in
H_\w^n$ one has $B(x,{\tau_\w^n}) \subset V_\w^n$. In particular, every \( H_\w^{n} \) is covered by a finite number of hyperbolic pre-balls.
\end{enumerate}
\fle
The previous result is a standard consequence of the last item of Theorem~\ref{p.contr}; see e.g. \cite[Lemma 3.7]{ADL13}.

\subsection{Induced domain}
 This subsection is devoted to the choice of our domain $\Delta$ where the GMY structure will be defined.

 \begin{Proposition} \label{Ball on A} Assume that $(\Phi,\theta_\ep)$ is NUERO on a random invariant set $A$ with positive Lebesgue measure. So, for any small $\delta_1$ there is a ball $B$ of radius $\delta_1/4$ such that $\leb(B\setminus A)=0$.
\end{Proposition}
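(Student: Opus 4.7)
The plan is to pick a Lebesgue density point of $A$, use NUERO to produce $\lambda$-hyperbolic times along its random orbit, exploit bounded distortion of $f_\omega^n$ on the corresponding hyperbolic pre-ball together with random invariance of $A$ to force the hyperbolic ball of radius $\delta_1$ to be mostly inside $A$, and then extract a limit ball of radius $\delta_1/4$ by compactness of $M$.

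Concretely, I would first pick a point $x_0 \in A$ satisfying simultaneously: $x_0$ is a Lebesgue density point of $A$ (which holds for $\leb\vert A$-a.e.\ point), and for $\theta_\ep^\NN$-a.e.\ $\omega$ the frequency of $\lambda$-hyperbolic times for $(\omega, x_0)$ is at least $\zeta$, as provided by Proposition~\ref{pr.hyperbolic1}. Fix one such $\omega$ and an infinite increasing sequence $n_1 < n_2 < \cdots$ of $\lambda$-hyperbolic times for $(\omega, x_0)$. For each $k$, Lemma~\ref{p.contr} furnishes a pre-ball $V_k := V_\omega^{n_k} \ni x_0$ with $V_k \subset B(x_0, \delta_1 \lambda^{n_k/2})$ and $f_\omega^{n_k}\colon V_k \to B_k := B(y_k, \delta_1)$ a diffeomorphism with bounded distortion, where $y_k := f_\omega^{n_k}(x_0)$.

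Next, I would derive the key inequality
\[
\frac{\leb(B_k \setminus A)}{\leb(B_k)} \le C_2 \, \frac{\leb(V_k \setminus A)}{\leb(V_k)}
\]
from two ingredients. First, random invariance of $A$ yields $V_k \cap A \subset (f_\omega^{n_k})^{-1}(A)$ for $\theta_\ep^\NN$-a.e.\ $\omega$, and therefore
\[
B_k \setminus A = f_\omega^{n_k}\bigl(V_k \setminus (f_\omega^{n_k})^{-1}(A)\bigr) \subset f_\omega^{n_k}(V_k \setminus A).
\]
Second, Lemma~\ref{co.contraction}(1) applied to $V_k \setminus A$ and $V_k$ bounds the Lebesgue measure ratio and gives the stated inequality. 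To show the right-hand side tends to $0$, I would argue that since $V_k \subset B(x_0, \delta_1 \lambda^{n_k/2})$ shrinks to $x_0$ and contains an inner ball $B(x_0, \tau^{n_k}_\omega)$ by Lemma~\ref{co.contraction}(2), the Lebesgue density of $A$ at $x_0$ forces $\leb(V_k \setminus A)/\leb(V_k) \to 0$. Finally, by compactness of $M$ a subsequence $y_k \to y^* \in M$; for $k$ large $B(y^*, \delta_1/4) \subset B_k$, so
\[
\leb\bigl(B(y^*, \delta_1/4) \setminus A\bigr) \le \leb(B_k \setminus A) \to 0,
\]
and $B := B(y^*, \delta_1/4)$ is a ball of radius $\delta_1/4$ with $\leb(B \setminus A) = 0$.

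The main obstacle is the density-transfer step $\leb(V_k \setminus A)/\leb(V_k) \to 0$: a priori the hyperbolic pre-ball $V_k$ could be geometrically thin, since the singular values of $Df_\omega^{n_k}(x_0)$ need not all be comparable to $\lambda^{-n_k/2}$, and then Lebesgue density of $A$ at $x_0$ for round balls does not transfer to $V_k$. Overcoming this relies on the uniform regularity of hyperbolic pre-balls: together with the Jacobian bounded distortion of Lemma~\ref{p.contr}(iii) and the inner-ball property of Lemma~\ref{co.contraction}(2), one ensures that $V_k$ is comparable up to uniform constants to a round ball of radius $\asymp \delta_1 \lambda^{n_k/2}$, which is precisely what makes the density transfer go through.
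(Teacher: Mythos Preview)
Your overall strategy---pick a Lebesgue density point $x_0\in A$ with infinitely many $\lambda$-hyperbolic times for a generic $\omega$, push the density forward along hyperbolic pre-balls via bounded distortion and random invariance, then extract a limit ball by compactness of $M$---is exactly the argument of \cite[Proposition~2.13]{AV13}, to which the paper simply defers. So the route is the right one.

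The gap is in how you dispose of the obstacle you correctly flag. You claim that Jacobian bounded distortion (Lemma~\ref{p.contr}(3)) together with the inner-ball property of Lemma~\ref{co.contraction}(2) force $V_k$ to be ``comparable up to uniform constants to a round ball of radius $\asymp\delta_1\lambda^{n_k/2}$''. This is false in general: the inner radius $\tau_\omega^{n}$ in Lemma~\ref{co.contraction}(2) carries no uniformity in $n$, and the hyperbolic-time condition controls only $\|Df_\omega^{n_k}(\cdot)^{-1}\|\le\lambda^{n_k/2}$, i.e.\ the \emph{smallest} singular value of $Df_\omega^{n_k}$, placing no constraint on the largest one. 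When the Lyapunov spectrum is nontrivial the pre-ball $V_k$ is an exponentially eccentric set: one has $V_k\subset B(x_0,r_k)$ with $r_k=\delta_1\lambda^{n_k/2}$, but only $\leb(V_k)\le C\,r_k^d$, \emph{not} $\leb(V_k)\ge c\,r_k^d$, so the estimate $\leb(V_k\setminus A)\le\leb\bigl(B(x_0,r_k)\setminus A\bigr)=o(r_k^d)$ coming from round-ball Lebesgue density at $x_0$ does not yield $\leb(V_k\setminus A)/\leb(V_k)\to 0$. Already the linear map $(x,y)\mapsto(3x,2y)$ on $\mathbb T^2$ produces hyperbolic pre-balls with eccentricity $(3/2)^n\to\infty$. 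Thus the density-transfer step, as you have written it, is not justified; roundness of $V_k$ is simply not available from the lemmas you invoke, and this step needs a genuinely different argument.
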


For a proof of this result see   \cite[Proposition 2.13]{AV13}.

\begin{Lemma} \label{dense} Assume that $f: M \rightarrow M$ is topologically transitive. Then, there are $p\in B$ and $N_0\in\NN$ such that $\cup_{j=0}^{N_0} f^{-j}\{p\}$ is $\delta_1/4$-dense in $A$.
\end{Lemma}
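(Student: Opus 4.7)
}

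The plan is to exploit topological transitivity plus a Baire-style finite intersection of open dense sets. Since $M$ is compact, so is $\bar A$, and I first cover $\bar A$ by a finite collection of open balls $B_1=B(y_1,\delta_1/8),\ldots,B_k=B(y_k,\delta_1/8)$. The goal becomes showing that we can pick a single $p\in B$ together with a uniform bound $N_0$ so that for each $i$ the finite union $\bigcup_{j=0}^{N_0}f^{-j}\{p\}$ meets $B_i$; then for any $y\in A\subset \bar A$ the point $y$ lies in some $B_i$, hence is within $\delta_1/8+\delta_1/8=\delta_1/4$ of the point of $\bigcup_{j=0}^{N_0}f^{-j}\{p\}$ contained in $B_i$.

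Next I would use that $f$ is a $C^2$ local diffeomorphism, so in particular an open map, to conclude that for each $i$ and each $n\ge 0$ the image $f^n(B_i)$ is open. Set $U_i=\bigcup_{n\ge 0}f^n(B_i)$. Topological transitivity of $f$ says that for any nonempty open set $V\subset M$ there is some $n\ge 0$ with $f^n(B_i)\cap V\ne\varnothing$, which exactly means $U_i$ is open and dense in $M$. Since there are only finitely many indices, $U:=\bigcap_{i=1}^k U_i$ is a finite intersection of open dense sets and so is itself open and dense. In particular $U\cap B\ne\varnothing$, and I fix any $p\in U\cap B$.

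The conclusion is then immediate: for each $i$, the relation $p\in U_i$ yields some $n_i\ge 0$ and some $x_i\in B_i$ with $f^{n_i}(x_i)=p$, i.e.\ $x_i\in f^{-n_i}\{p\}$. Setting $N_0:=\max_{1\le i\le k}n_i$ gives $x_i\in\bigcup_{j=0}^{N_0}f^{-j}\{p\}\cap B_i$ for every $i$, so by the triangle inequality any $y\in A$ lies within $\delta_1/4$ of this set, which is precisely $\delta_1/4$-density in $A$.

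The only step that requires any real care is the step asserting that $f^n(B_i)$ is open and that transitivity yields density of $U_i$: openness relies on $f$ being a local diffeomorphism (so an open map, hence so are its iterates), and I would invoke the standard equivalence between the existence of a dense forward orbit and the property $f^n(U)\cap V\ne\varnothing$ for all nonempty open $U,V$. Everything else is a cover-and-pigeonhole argument, and there is no obstacle beyond being careful that the cover of $\bar A$ is finite (using compactness of $M$) so that the bound $N_0$ exists.
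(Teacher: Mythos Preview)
Your argument is correct. It differs from the paper's proof mainly in presentation: the paper picks a single point $q$ with dense forward orbit, chooses $N_0$ large enough that the segment $q,f(q),\dots,f^{N_0}(q)$ is already $\delta_1/4$-dense in $A$ and that $f^{N_0}(q)\in B$, and then sets $p=f^{N_0}(q)$; the inclusion $\{q,\dots,f^{N_0}(q)\}\subset\bigcup_{j=0}^{N_0}f^{-j}\{p\}$ gives the conclusion in one line. Your route instead works through the open-sets formulation of transitivity, building the dense open sets $U_i=\bigcup_{n\ge0}f^n(B_i)$ and intersecting them to locate $p$. Both are elementary and equivalent in strength; the paper's version is shorter, while yours makes the finite-cover mechanism behind the uniform bound $N_0$ more explicit and avoids having to arrange simultaneously that the orbit segment is dense and that its endpoint lands in $B$.
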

\begin{proof}
Since $f$ is transitive in $M$, so there is a point in $M$ with dense orbit. Take $N_0$ for which $q, f(q), ...., f^{N_0}(q)$ is $\delta_1/4$-dense in $A$
and $f^{N_0}(q) \in B$. The point $p = f^{N_0}(q)$ satisfies the conclusion of the lemma.
\end{proof}

In order to choose our inducing domain we take a certain $\delta_0$ such that $0< \delta_0\ll \delta_1$ in a sense to be determined.  Henceforth we fix the sets
$$\Delta=
B(p,\delta_0),{\quad} \quad\text{and}\quad \Delta^c= M\setminus\Delta.
$$

Next result says that for every random orbit, every ball of sufficiently large size, i.e.
of radius at least $\delta_1$, contains a subset which is mapped diffeomorphically with bounded distortion onto $\Delta$
within a uniformly bounded number of iterations.

 \cle\label{le.returnball}\cite[Lemma 4.14]{AV13} Let $A$ be a random invariant set.
If $\epsilon$ is sufficiently small, there are $D_0$ and $K_0$ such that 
for every $\omega\in\supp(\theta_\ep^\NN)$ and any ball $B$ of radius $\delta_1$ with $\leb(B\setminus A)=0$ there are an open set $V\subset B$ and $0\le m\le N_0$ for which $f^m_\omega$ maps
$V$ diffeomorphically onto $\Delta$ with bounded distortion: for $x, y\in V$ we have
\begin{equation}\label{bd2}
\log\left|\frac{\det Df_{\omega}^m(x)}{\det Df_{\omega}^m(y)}\right|\leq D_0
\dist(f_{\omega}^m(x), f_{\omega}^m(y)),
\end{equation}
and for each $0\leq j\leq m$
and all $x\in f_{\omega}^{j}(V)$ we have
\begin{equation}\label{bd3}
K_{0}^{-1}\leq \|Df_{\sigma^{j}(\omega)}^{m-j}(x)\|, \|(Df_{\sigma^{j}(\omega)}^{m-j}(x))^{-1}\|,  |\det Df_{\sigma^{j}(\omega)}^{m-j}(x)| \leq K_0.
\end{equation}
\fle


\subsection {The auxiliary partition}\label{se.par}
We will now construct the random partition $\mathcal P_{\omega}$ on the reference ball $\Delta=
B(p,\delta_0)$ for $\theta_\ep^\NN$-a.e.  $\omega\in T^{\NN}$. In particular we choose $\delta_0 > 0$ small so that
\begin{equation}\label{estdelta0}
    2\delta_0K_0 ^{N_0}\sigma^{-N_0}< \delta_1K_0 ^{-N_0}.
\end{equation}

 By Lemma~\ref{p.contr} given some $\omega\in T^{\NN}$ and $x \in H^n_{\omega}$ there exists an hyperbolic pre-ball $V^n_{\omega}(x)$
such that  $f_{\omega}^n(V^n_\omega(x)) = B(f_{\omega}^n(x),{\delta_1})$.
From Lemma~\ref{le.returnball} there are a set  $U^{n,m}_{\omega,x} \subset B(f_{\omega}^n(x),{\delta_1})$ and
an integer $0\le m\le N_0$ such that

\begin{equation}\label{D.candidate2}
 f_\omega^{n+m}(U^{n,m}_{\omega,x})=\Delta.
 \end{equation}
As the condition \eqref{D.candidate2} may in principle hold for several values of $m$, for definiteness we shall always assume that $m$ takes the smallest possible value. Observe that  $U^{n,m}_{\omega,x}$ is associated to $x$, by construction, but does not necessarily contain~$x$.
The sets of the type $U^{n,m}_{\omega,x}$, with $x\in H^n_{\omega}\cap\Delta$, are the natural candidates to be in the partition~$\mathcal P_{\omega}$.

In the sequel, we shall frequently omit the symbols $m$, $x$ or $n$ in the notation
 and simply use $U^n_{\omega,x}$, or even $U_\omega$ to denote an element  $U^{n,m}_{\omega,x}$.
Along the process we will introduce inductively sequences of objects
$(\Delta^n_\omega)_n$, $(\mathcal U^n_{\omega})_n$
and $(S_\omega^n)_n$. For each $n$, $\Delta^n_{\omega}$ will be defined as the set of points which  does not belong to any element of the partition constructed up to time $n$,
$\mathcal U^n_{\omega}$ as the union of elements of the partition constructed at step $n$ and $S^n_{\omega}$ as the finite union of hyperbolic pre-balls at step $n$.
A key point in our argument is property \eqref{Hdel_n} below, which says that every point having a hyperbolic time at a given time $n$ will belong to either to an element of the partition or to some \emph{satellite}. All these  and some other auxiliary objects will be defined inductively in the remaining part of this subsection.

\subsubsection*{First step of induction}
Consider some large integer \(R_0\in\mathbb{N} \) to settle the initial step of the construction process. We simply ignore the dynamics before time $R_0$, that will be determined later in Section~ \ref{sub.ebd} (independent of $\omega$).
Recall that there is a radius $\tau_\w^{R_0}>0$  such that each hyperbolic pre-ball $V^{R_0}_{\omega}(x)$  with $x\in H_\omega^{R_0}$ contains a ball of radius $\tau_\w^{R_0}>0$.
Thus, there are finitely many points
$z_{\omega}^1,\ldots,z_{\omega}^{N_{R_0}}\in H^{R_0}_{\omega}$ such that
$$H^{R_0}_{\omega}\cap \Delta\subset S^{R_0}_{\w}  := V^{R_0}_{\omega}(z_{\omega}^1)\cup\dots\cup
V^{R_0}_{\omega}(z_{\omega}^{N_{R_0}}).$$
We take a maximal subset of points $\{x_0,\ldots, x_{k_{R_0}}\}$ such that the corresponding sets of type (\ref{D.candidate2}) are pairwise disjoint and contained in
$\Delta$, and let
$${\mathcal P}_{\omega}^{R_0}=
\{U^{R_0,m_0}_{\omega,x_0}, U^{R_0,m_1}_{\omega, x_1},\ldots,
U^{R_0,m_{k_{R_0}}}_{\omega,x_{k_{R_0}}}\}.
$$
These sets are precisely the elements of the partition ${\mathcal P}_{\omega}$ constructed in the initial $R_0$
step  of the algorithm. Let
\begin{equation*} \Delta^{R_0}_{\omega} = \Delta\setminus \mathcal  \bigcup_ {U \in \mathcal P_{\omega}^{R_0}} U,
\end{equation*}
that is, the points which do not yet belong to any element of the partition.

For each $0\leq i \leq k_{R_0}$, we define the return
time
\begin{equation*}
{\mathcal R}_\omega(x)=R_0+m_i
\end{equation*}
for each $x\in U_{\omega,x_i}^{R_0,m_i}$.

\subsubsection*{General step of induction}
The general step of the construction follows the ideas in the first step with
minor modifications. Given $n>R_0$, we assume that $\mathcal P^j_{\omega}, S^j_{\omega}, \Delta^j_{\omega}, \{{\mathcal R}_{\omega}=j+m\}$ are defined for all $R_0\le j\le n-1$ and $0\leq m \leq N_0$.  As in the initial step, there is a finite set of points
$z_{\omega}^1,\ldots,z_{\omega}^{N_{n}}\in  H_\omega^n\cap\Delta_\omega^{n-1}$ such that
$$ H^n_{\omega}\cap\Delta_\omega^{n-1}\subset S_{\w}^n :=
V^n_{\omega}(z_{\omega}^1)\cup\dots\cup V^n_{\omega}(z_{\omega}^{N_{n}}).$$
 We choose a maximal subset of points   $\{x_0,\ldots,x_{k_n}\} \in \{z_{\omega}^1,\ldots,z_{\omega}^{N_{n}}\} $
  such that the corresponding  sets
   of type \eqref{D.candidate2}  are  pairwise disjoint contained in
$\Delta^{n-1}_{\omega}$. Then we let
\begin{equation*}
   {\mathcal P}_{\omega}^n = \{U^{n,m_0}_{\omega,x_0}, U^{n,m_1}_{\omega, x_1},\ldots,
U^{n,m_{k_{n}}}_{\omega,x_{k_{n}}}\}.
\end{equation*}
 These are elements of the partition $\mathcal P_\w$ constructed in the $n$-step of the algorithm.
 We also define the set of points of $\Delta$ which do not belong to partition elements constructed up to this point:
 \begin{equation*}
\Delta^n_{\omega}= \Delta\setminus \bigcup_{U \in {\mathcal P}_{\omega}^{R_0}, \ldots, {\mathcal P}_{\omega}^n }^{}U.
\end{equation*}

For each $i=0,\dots,k_n$ and $x\in U_{\omega,x_i}^{n,m_i}$, we set the return time
 $${\mathcal R}_{\omega}(x_i)=n+m_i.$$

 Obviously, \begin{equation}\label{Hdel_n} H^n_{\omega}\cap\Delta\subset
{S}^n_{\omega}\cup \bigcup_{U \in {\mathcal P}_{\omega}^{R_0}, \ldots, {\mathcal P}_{\omega}^n }^{}U.
\end{equation}

More specifically we have that $H^n_{\omega}\cap\Delta_\omega^{n-1}\subset S_{\w}^n$, i.e. all points in \( \Delta_\omega^{n-1} \) which have a hyperbolic time at time \( n \) are ``covered'' by the \emph{satellites} \( S_\omega^{n} \) while the points which have a hyperbolic time at time \( n \) but which are already contained in previously constructed partition elements, are trivially ``covered'' by the union of these partition elements. The inclusion \eqref{Hdel_n} will be crucial to prove the integrability of the return times.
This inductive construction allows us to define the family
%
 \begin{equation*}
    \mathcal P_\w = \bigcup_{n\geq R_0} \mathcal P_\w^n
 \end{equation*}

of pairwise disjoint subsets of $\Delta$.
We are going to prove that $\mathcal P_\w$  forms a $\leb$ mod 0 partition of $\Delta$. In fact, the elements of \(  \mathcal P_\w  \) are automatically disjoint by construction and almost every point \(  x  \) has a basis of arbitrarily small neighborhoods which in time grow to large scale and return to \(  \Delta  \) within a finite number of iterates, and each of these returns is a candidate for the creation of an element of \( \mathcal P_\w \) containing \( x \).  The potential problem is  that each time such an opportunity arises, it may be possible that the region \( U^{n,m}_{\omega,x}  \) which returns either does not contain \(  x  \) or cannot be chosen because it overlaps a previously constructed element of \(  \mathcal P_\w  \).  Thus it is theoretically conceivable a priori that \(  \mathcal P_\w  \) may not have full measure in \(  \Delta  \).

First of all, in the next section we are going to see that  the sets of family \(  \mathcal P_\w  \) has the properties of a random GMY structures.

\subsection {Expansion, bounded distortion and uniformity}\label{sub.ebd}
The return time ${\mathcal R}_{\omega}$ for an element $U_{\omega}$
of the partition ${\mathcal P}_{\omega}$ of $\Delta$ is made by a hyperbolic time $n$ where hyperbolic pre-balls $V^n_{\omega}$ are sent onto hyperbolic balls, plus a time $m\leq N_{0}$ such that $f_{\omega}^{n+m}(V^n_{\omega})$ covers $\Delta$ completely. Recalling the expansion property of hyperbolic pre-balls and \eqref{bd3} we have
\begin{eqnarray*}
  \|Df_{\omega}^{n+m}(x)^{-1}\| &\le& \|Df_{\sigma^n(\omega)}^{m}(f_{\omega}^{n}(x))^{-1}\|.
\|Df_{\omega}^{n}(x)^{-1}\| \\
   &\le &K_{0}\lambda^{n/2}\\
   &\le& K_{0}\lambda^{(R_0-N_{0})/2}.
\end{eqnarray*}
If we take $R_0$ sufficiently large, this is smaller than some $\kappa<1$. Moreover, for $K = D_{0}+
C_{0}K_{0 }$, from Lemmas~\ref{p.contr} and~\ref{le.returnball} we have that for any $x, y\in U_{\omega}$
with return time ${\mathcal R}_{\omega}$,
$$\log\left|\frac{\det Df_{\omega}^{{\mathcal R}_{\omega}}(x)}{\det Df_{\omega}^{{\mathcal R}_{\omega}}(y)}\right|\leq K \dist(f_{\omega}^{{\mathcal R}_{\omega}}(x),
f_{\omega}^{{\mathcal R}_{\omega}}(y)).$$
Clearly, $\kappa$ and $K$ could be
taken the same for all $\omega$.
We moreover notice that we may construct  the open sets $\{{\mathcal R}_\omega=\ell\}$ only depending on $\omega_0,\ldots,\omega_{\ell-1}$. In particular, we can regard $\{{\mathcal R}(\omega,x)=\ell\}$ essentially as a union of rectangles $\{\omega:\omega_0,\ldots,\omega_{\ell-1}\, \textrm{fixed}\}\times\{{\mathcal R}_\omega=\ell\}$, which is $\mathcal F\otimes\mathcal B$-measurable.


It is still necessary to verify that the described algorithm actually produces a $\leb$ mod~0 partition $\mathcal P_\w$  of $\Delta$.
For this, notice that by construction $\Delta\supset\Delta_\omega^{R_0}\supset \Delta_\omega^{R_0+1}\supset\cdots$,
so we only have to check that $\leb (\cap_{n}\Delta_\omega^{n})=0.$ This is a consequence of the following  result whose proof follows as in~\cite[Proposition 4.3]{ADL13}, where the estimates depend ultimately in the expansion and bounded distortion constants of the GMY structure that we assume to be uniform in $\w$.
%
%

\cpr\label{d.prop.Sn} There is a constant $L>0$ such that for $\theta_\ep^\NN$-a.e.  $\w\in T^\NN$ \[\sum\limits_{n=R_0}^{\infty}\leb(S_{\w}^n) < L < \infty.\]
\fpr

Indeed, it follows from Proposition \ref{d.prop.Sn} and
Borel-Cantelli lemma that $\leb$-a.e.
$x\in \Delta$ belongs only to finitely many ${S}_{\w}^{n}$'s, and therefore one can find $n$ such that
 $x\notin {S}_{\w}^j$ for $j\ge n$.
 Since  $\leb$-a.e.  $x\in\Delta$ has infinitely many hyperbolic times, it follows from \eqref{Hdel_n}
that  $x\in U$ for some $U\in\mathcal P_{\w}^{R_0}\cup\cdots\cup \mathcal P_{\w}^n$ and therefore we have
 $\leb (\cap_{n}\Delta_\omega^{n})=0$.

\section{Liftability}\label{s:lift}

\subsection{Induced measures}
Consider a random perturbation  $(\Phi,\theta_\ep)$ inducing a random GMY structure in a ball $\Delta\subset M$, with return time $\mathcal R:T^\NN\times \Delta \to \NN$.
We start by considering an extension of the random perturbations to a two-sided noise driving dynamics. Similarly to the one-sided case, we consider now the product space $T^\ZZ$ and the probability product measures $\theta_\ep^\ZZ$. Let $\bar\sigma:T^\ZZ\to T^\ZZ$ be the two-sided left shift $\bar\sigma(\bar\w)=\bar\sigma(\ldots,\w_{-1},\w_0,\w_1,\ldots)=(\ldots,\w_{0},\w_1,\w_2,\ldots)$ and $\pi^+\colon T^\ZZ\to T^\NN$ the natural projection $\pi^+(\bar\w)=\pi^+(\ldots,\w_{-1},\w_0,\w_1,\ldots)=(\w_0,\w_1,\ldots)$.  We also extend the GMY structure to a two-sided version by considering for each ${\bar{\w}}\in T^\ZZ$ the elements $\mathcal P_{{\bar{\w}}}$ to be exactly the same as $\mathcal P_{\pi^+({\bar{\w}})}$ and taking the return time $\bar\cR({\bar{\w}},x)$ as $\cR(\pi^+({\bar{\w}}),x)$. For the random orbits we consider $f_{\bomega}=f_{\pi^+(\bomega)}$. We stress that in this setting the noise driving dynamics $\bar\sigma$ is invertible but this is not required for the random maps $f_{\bomega}$.

We define now a random induced dynamical system as in \cite{AV13}. Let us consider a family $(\Delta_{\bar\w})_{\w\in T^\ZZ}$ of disjoint copies $\Delta_{\bar\omega}$ of
$\Delta$, associated to $\bar\omega\in T^\ZZ$, and their
partitions $\cp_{{\bar{\w}}}$.  For $x\in\Delta_{{\bar{\w}}}$ we define
$F_{\bomega}(x)=f_{\bomega}^{\bar\cR(\bar\w,x)}(x)\in\Delta_{\sigma^{\bar\cR(\bar\w,x)}({\bar{\w}})}$.
Given $A\subset\Delta_{\bar\omega}$ set
$$F^{-1}(A)=\bigsqcup_{n\in\NN}\left\{x\in\Delta_{\bar\sigma^{-n}
(\bomega) } \colon \bar\cR({\bar\sigma^{-n}(\bomega)},x)=n \qand
F_{\bar\sigma^{-n}(\bomega)}(x)\in A\right\}.$$ Moreover,
given a family
$(\nu_{\bar\sigma^{-n}(\bomega)})_{n\in\NN}$ of measures on
$\displaystyle\bigsqcup_{n\in\NN}\Delta_{\bar\sigma^{-n}(\bomega)}$ and a measurable set $A\subset\Delta_{\bar\omega}$ we define
\begin{equation*}\label{pseudo
pf}F_{*}(\nu_{\bar\sigma^{-n}(\bomega)})_{n\in\NN}(A)=\sum_{n\in\NN}
\nu_{\bar\sigma^{-n}(\bomega)}(F^{-1}(A)\cap\Delta_{\bar\sigma^{-n}
(\bomega) } ).
\end{equation*}
 The next result gives the existence of suitable induced measures. The proof can be found in~\cite[Theorem 2.10]{AV13}. Denote by $\leb_\Delta$ the restriction of the Lebesgue measure to $\Delta$.
\begin{Theorem}

  \label{exist.mu.induced} Consider a random perturbation  $(\Phi,\theta_\ep)$ inducing a (two-sided) random GMY structure in a ball $\Delta\subset M$. For $\theta_\ep^\ZZ$-a.e.  $\bomega\in
T^\ZZ$ there is an absolutely continuous
 finite measure $\nu_{\bar{\w}}$ on $\Delta$ such that
$F_{*}(\nu_{\sigma^{-n}(\bar\omega)})_{n\in\NN}=\nu_{\bar\w}$.
Moreover, there is a constant $K_1>0$ such that
$d\nu_{\bar\w}/d\leb_\Delta\leq K_1$ for $\theta_\ep^\ZZ$-a.e.  $\bomega\in
T^\ZZ$.
\end{Theorem}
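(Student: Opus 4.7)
My plan is to adapt the classical construction of an equivariant (a.c.i.m.) measure from Young's tower setting to the present random situation, mimicking what is done in \cite{AV13}. The key analytic tool will be the transfer operator acting on families of densities indexed by $\bar\w\in T^\ZZ$, and the key geometric inputs will be the uniform expansion (ii) and uniform bounded distortion (iii) of Definition~\ref{def GMY}.

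First I would set up the relevant Perron--Frobenius operator. Given a measurable family $(g_{\bar\w})_{\bar\w\in T^\ZZ}$ of densities on $\Delta$, define
\[
(\mathcal L g)_{\bar\w}(y)\;=\;\sum_{n\ge 1}\;\sum_{\substack{U\in\mathcal P_{\bar\sigma^{-n}\bar\w}\\ \bar{\mathcal R}\vert_U = n}}\frac{g_{\bar\sigma^{-n}\bar\w}\bigl((F_{\bar\sigma^{-n}\bar\w}\vert_U)^{-1}(y)\bigr)}{\bigl|\det DF_{\bar\sigma^{-n}\bar\w}\bigl((F_{\bar\sigma^{-n}\bar\w}\vert_U)^{-1}(y)\bigr)\bigr|}.
\]
A fixed point $g^\infty$ of $\mathcal L$ with $\bar\w\mapsto g^\infty_{\bar\w}$ measurable will give the desired family $\nu_{\bar\w}=g^\infty_{\bar\w}\,d\leb_\Delta$, since the functional identity $\mathcal L g^\infty=g^\infty$ is exactly the density form of $F_*(\nu_{\bar\sigma^{-n}\bar\w})_{n\in\NN}=\nu_{\bar\w}$. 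Note that the two-sided nature of $T^\ZZ$ is essential here, as we must be able to invoke the preimage fibers $\bar\sigma^{-n}\bar\w$ for every $n\ge 1$.

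Second, I would establish a uniform $L^\infty$ bound preserved by $\mathcal L$. Using the bounded distortion property (iii), for any $y\in\Delta$, any partition element $U$, and any $z\in U$ we have
\[
\frac{1}{|\det DF_{\bar\w'}(z)|}\;\le\; e^{K\diam\Delta}\,\frac{\leb(U)}{\leb(\Delta)},
\]
which follows by integrating the reciprocal Jacobian against Lebesgue on $\Delta$ and changing variables via $F_{\bar\w'}\vert_U$. Since for each fixed $\bar\w'=\bar\sigma^{-n}\bar\w$ the sets $\{U\in\mathcal P_{\bar\w'}:\bar{\mathcal R}\vert_U=n\}$ are pairwise disjoint in $\Delta$, summing gives $\sum_{n,U}\leb(U)\le\leb(\Delta)$, and hence, whenever $\|g\|_\infty:=\sup_{\bar\w}\|g_{\bar\w}\|_\infty\le C_0$,
\[
\|(\mathcal L g)_{\bar\w}\|_\infty\;\le\;C_0\,e^{K\diam\Delta}.
\]
Thus the cone $\mathcal K$ of measurable fibered densities uniformly bounded by $K_1:=e^{K\diam\Delta}$ is $\mathcal L$-invariant after one iterate starting from the constant family $g^{(0)}_{\bar\w}\equiv 1$.

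Third, I would extract a fixed point via a Cesàro/Yosida argument. Starting from $g^{(0)}\equiv 1$, form $\bar g^{(N)}:=\frac1N\sum_{k=0}^{N-1}\mathcal L^k g^{(0)}$. Since the family $\{\bar g^{(N)}\}$ lies in the weak-* compact subset of $L^\infty(\theta_\ep^\ZZ\otimes\leb_\Delta)$ cut out by $\|\cdot\|_\infty\le K_1$, one extracts a weak-* limit $g^\infty$ (passing to a subsequence); the telescoping identity $\mathcal L\bar g^{(N)}-\bar g^{(N)}=\frac1N(\mathcal L^N g^{(0)}-g^{(0)})$ is uniformly bounded and vanishes in the limit against any $L^1$ test function, so $\mathcal L g^\infty=g^\infty$. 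Setting $\nu_{\bar\w}=g^\infty_{\bar\w}\,d\leb_\Delta$ yields the asserted equivariance, and $d\nu_{\bar\w}/d\leb_\Delta\le K_1$ is automatic.

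The main obstacle will be the measure-theoretic bookkeeping in the random, two-sided setting: verifying that the operator $\mathcal L$ is well-defined on measurable families (the partition $\mathcal P_{\bar\w}$ and return time $\bar{\mathcal R}$ must depend measurably on $\bar\w$, which is inherited from the GMY construction), and that the weak-* limit $g^\infty$ yields a genuine fiberwise density in the disintegration of $\nu=g^\infty\cdot(\theta_\ep^\ZZ\otimes\leb_\Delta)$. The argument in \cite[Theorem 2.10]{AV13} handles exactly these points and reuses the uniform expansion and distortion constants provided by Definition~\ref{def GMY} to make the Cesàro-average limit robust across fibers.
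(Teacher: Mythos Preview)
The paper does not give its own proof of this theorem; it simply cites \cite[Theorem~2.10]{AV13}. Your overall plan---fiberwise Perron--Frobenius operator, uniform distortion control, then a Ces\`aro/weak-$*$ extraction---is the right shape and is indeed in the spirit of~\cite{AV13}.

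That said, your step ``Second'' has a genuine gap. You assert that $\sum_{n,U}\leb(U)\le\leb(\Delta)$ because ``the sets $\{U\in\mathcal P_{\bar\w'}:\bar{\mathcal R}\vert_U=n\}$ are pairwise disjoint in $\Delta$''. Disjointness holds only \emph{within each fiber}: for different $n$, the elements $U$ live in different copies $\Delta_{\bar\sigma^{-n}\bar\w}$, so the double sum is
\[
\sum_{n\ge1}\leb\bigl(\{\bar{\mathcal R}_{\bar\sigma^{-n}\bar\w}=n\}\bigr),
\]
and there is no reason this should be bounded by $\leb(\Delta)$ for a fixed $\bar\w$. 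Integrating over $\bar\w$ and using the $\bar\sigma$-invariance of $\theta_\ep^{\ZZ}$ does give $\leb(\Delta)$ \emph{on average}, but the pointwise estimate fails (one can build toy examples where the sum equals $2\leb(\Delta)$ on a set of $\bar\w$ of positive measure). Hence the claimed $\mathcal L$-invariance of the cone $\{\|g\|_\infty\le K_1\}$ is not established, and the Ces\`aro argument that follows---which rests on a uniform $L^\infty$ bound along the iterates---does not go through as written.

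What is missing is an additional regularity control that survives iteration. In \cite{AV13} one works with a cone of log-H\"older (equivalently, log-Lipschitz on the ball $\Delta$) densities: the uniform expansion~(ii) improves the regularity constant geometrically while the distortion~(iii) contributes a bounded defect, so the cone is genuinely invariant under the backward compositions of the fiber transfer operators. The log-regularity, together with the fact that the total mass is controlled on average, then yields the uniform pointwise bound $d\nu_{\bar\w}/d\leb_\Delta\le K_1$ for the limit density. The pullback nature of this construction is also what makes $\nu_{\bar\w}$ depend only on the past $\bar\w^-$, as recorded in the remark right after the theorem.
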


We notice that the authors in \cite{AV13} claim an uniform lower bound for $d\nu_{\bar\w}/Leb$ which seems not possible to reach. However either in \cite{AV13} or in this work this former estimate on the lower bound is not needed.
We introduce now the following  map
$$
\Theta\colon T^\ZZ\x\Delta\to T^\ZZ\x\Delta$$ given by
\[\Theta(\bw,x)=(\bar\sigma^{\bar\cR(\bw,x)}(\bw),F_{\bar\w}(x))=(\bar\sigma^{\bar\cR(\bw,x)}(\bw),f_{\bar\w}^{\bar\cR(\bar\w,x)}(x)).
\]
Notice that this is not a skew-product application. Let $\nu$ be a measure on $(T^\ZZ\x\Delta,\mathcal T^\ZZ\bigotimes\mathcal{B}_\Delta)$, where $\mathcal{B}_\Delta$ denotes the Borel $\sigma$-algebra of $\Delta$, that disintegrates by $(\nu_{\bw})_{\bw\in T^\ZZ}$. That is, for each measurable set $A\subset T^\ZZ\x\Delta$, we have $$\nu(A)=\int_{T^\ZZ}\nu_{\bw}(A_{\bar\w})\,d\theta_\ep^\ZZ(\bar\w),$$
where $A_{\bw}=\{x\in\Delta\colon (\bw,x)\in A\}$.

\begin{Lemma}
  The measure $\nu$ is finite, $\Theta$-invariant and absolutely continuous with respect to $\theta_\ep^\ZZ\x\leb_\Delta$.
\end{Lemma}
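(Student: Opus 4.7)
The plan is to verify the three assertions separately, treating finiteness and absolute continuity as easy consequences of Theorem~\ref{exist.mu.induced} and disintegration, and devoting the real work to the $\Theta$-invariance.

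\emph{Finiteness and absolute continuity.} Using the upper bound $d\nu_{\bar\w}/d\leb_\Delta\leq K_1$ from Theorem~\ref{exist.mu.induced}, I would estimate $\nu(T^\ZZ\x\Delta)=\int \nu_{\bar\w}(\Delta)\,d\theta_\ep^\ZZ(\bar\w)\leq K_1\leb(\Delta)<\infty$, so $\nu$ is finite. For absolute continuity, if $A\subset T^\ZZ\x\Delta$ satisfies $(\theta_\ep^\ZZ\x\leb_\Delta)(A)=0$, then Fubini gives $\leb_\Delta(A_{\bar\w})=0$ for $\theta_\ep^\ZZ$-a.e.\ $\bar\w$; since each $\nu_{\bar\w}$ is absolutely continuous with respect to $\leb_\Delta$, this forces $\nu_{\bar\w}(A_{\bar\w})=0$ almost surely and therefore $\nu(A)=0$.

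\emph{Invariance.} This is the core of the argument. Given a measurable set $A\subset T^\ZZ\x\Delta$, I would decompose the fiber $(\Theta^{-1}(A))_{\bar\w'}$ according to the (constant on each partition element) value of the return time $\bar\cR$:
\[
(\Theta^{-1}(A))_{\bar\w'}=\bigsqcup_{n\in\NN}\bigl\{x'\in\Delta:\bar\cR(\bar\w',x')=n,\ f_{\bar\w'}^{n}(x')\in A_{\bar\sigma^n(\bar\w')}\bigr\}.
\]
Identifying the $n$-th term with $F^{-1}(A_{\bar\sigma^n(\bar\w')})\cap \Delta_{\bar\w'}$ (where $\bar\w'$ plays the role of $\bar\sigma^{-n}(\bar\w)$ for $\bar\w=\bar\sigma^n(\bar\w')$) and integrating, I would then write
\[
\nu(\Theta^{-1}(A))=\int\sum_{n\in\NN}\nu_{\bar\w'}\bigl(F^{-1}(A_{\bar\sigma^n(\bar\w')})\cap \Delta_{\bar\w'}\bigr)\,d\theta_\ep^\ZZ(\bar\w').
\]
Now I would perform the change of variables $\bar\w=\bar\sigma^n(\bar\w')$ term by term using the $\bar\sigma$-invariance of the product measure $\theta_\ep^\ZZ$, swap the (nonnegative) sum with the integral by Tonelli, and recognize the resulting integrand as $F_{*}(\nu_{\bar\sigma^{-n}(\bar\w)})_{n\in\NN}(A_{\bar\w})$, which equals $\nu_{\bar\w}(A_{\bar\w})$ by Theorem~\ref{exist.mu.induced}. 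Integrating yields $\nu(A)$, giving $\Theta$-invariance.

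\emph{Main obstacle.} The computation itself is routine once the bookkeeping is right; the only delicate point is the legitimacy of the measurable reorganization of $(\Theta^{-1}(A))_{\bar\w'}$ as a countable disjoint union indexed by the return time, together with the Fubini--Tonelli exchange and the shift-invariance of $\theta_\ep^\ZZ$ applied simultaneously to all terms. I would handle this by first verifying the identity for rectangles $A=B\x C$ with $B\subset T^\ZZ$ and $C\subset\Delta$ measurable (where each $A_{\bar\w}$ equals $C$ or $\varnothing$), and then extending to the generated $\sigma$-algebra by a monotone class argument.
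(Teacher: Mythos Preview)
Your proposal is correct and follows essentially the same route as the paper: finiteness from the uniform density bound $d\nu_{\bar\w}/d\leb_\Delta\le K_1$, absolute continuity from the fiberwise absolute continuity of $\nu_{\bar\w}$, and $\Theta$-invariance by decomposing according to the value of $\bar\cR$, using the $\bar\sigma$-invariance of $\theta_\ep^\ZZ$ together with Tonelli, and invoking the equivariance $F_{*}(\nu_{\bar\sigma^{-n}(\bar\w)})_{n\in\NN}=\nu_{\bar\w}$ from Theorem~\ref{exist.mu.induced}. The paper carries out the invariance computation directly on rectangles $A\times B$, exactly as you suggest in your ``main obstacle'' paragraph, so your reduction-plus-monotone-class plan matches the paper's argument.
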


\begin{proof}
The finiteness follows from the fact that there exists $K_1>0$ such that $d\nu_{\bar\w}/d\leb_\Delta<K_1$. Moreover, if $(\theta_\ep^\ZZ\x\leb_\Delta)(C)=0$ for any measurable set $C\subset T^\ZZ\x\Delta$, then for $\theta_\ep^\ZZ$-a.e. $\bar\w\in T^\ZZ$ one should have $\leb_\Delta(C_{\bar\w})=0$ and thus $\nu_\w(C_{\bar\w})=0$, which implies $\nu(C)=0$ and so $\nu\ll(\theta_\ep^\ZZ\x\leb_\Delta)$. For the invariance it is enough to prove that $(\Theta_*\nu) (A\x B) = \nu(A\x B)$ for any measurable set $A\x B\subset T^\ZZ\x\Delta$:
\begin{align*}
  (\Theta_*\nu) (A\x B)
  & = \nu(\Theta^{-1} (A\x B)) \\
   & =\sum_{n=1}^\infty \int_{\sigma^{-n}(A)} ((F_{\bar\w}\vert_{\Delta_{\bar\w}})_*\nu_{\bar\w})(B) d\P({\bar\w})\\
   & =\sum_{n=1}^\infty \int_{A} ((F_{\sigma^{-n}({\bar\w})}\vert_{\Delta_{\sigma^{-n}({\bar\w})}})_*\nu_{\sigma^{-n}({\bar\w})})(B) d\P({\bar\w})\\
   & =\int_A \sum_{n=1}^\infty ((F_{\sigma^{-n}({\bar\w})}\vert_{\Delta_{\sigma^{-n}({\bar\w})}})_*\nu_{\sigma^{-n}({\bar\w})})(B) d\P({\bar\w})\\
   & =\int_A \nu_{\bar\w}(B) d\P({\bar\w})\\
   & =\nu(A\x B).
\end{align*}
\end{proof}

\subsection{Integrability of the inducing times}
\label{s.inducing}

In this section we aim to obtain the integrability of the return time function $\bar{\cR}(\w,x)$ with respect to the measure $\nu$ taking into account the partition we constructed for the GMY structure in~Section~\ref{s:NUERO _GMY}. For a better understanding
we bring back our abstract setting and extend auxiliary sets to the two-sided perturbations.
Set a disk $\Delta$ of radius $\delta_0$
around a point $p$, $S_{\bar\w}^n=S_{\pi^+(\bar\w)}^n$ and $H_{\bar\w}^n=H_{\pi^+(\bar\w)}^n$ sets defined for all $n\geq R_0$, and for convenience we consider $S_{\bar\w}^n=\Delta$ for
$n<R_0$. These sets satisfy the following properties:
\begin{enumerate}
                \item[(a)] if $x \in H_{\bar\w}^j$ for $j\in\NN$, then $f_{\bar\w}^i(x) \in H_{\bar\sigma^(\bar\w)}^\ell$ for any $1\le i<j$  and
$\ell=j-i;$
                \item[(b)]  there is $\zeta>0$
such that for $\leb_\Delta$-a.e.  $x\in \Delta$,
$$\displaystyle\limsup _{n\to\infty}\frac{1}{n}\#\{1\leq j\leq n:
x\in H_{\bar\w}^j\} \geq \zeta;$$
                 \item[(c)]$(H_{\bar\w}^n\cap\{{\bar\cR}_{\bar\omega}\geq n\})\subset  (S_{\bar\w}^n \cup \{\bar{\mathcal R}_{\bomega}= n+m\})$
for some  $0\leq m\leq N_0;$
                \item[(d)] $\displaystyle\sum_{n=R_0}^{\infty}\leb_\Delta(S_{\bar\w}^n)<\infty.$
              \end{enumerate}
Moreover we introduce some notation.
For $(\bar\w,x)\in T^\ZZ\x \Delta$, $x$ may undergo several full returns to \( \Delta \) before time \( n \).  Then we define the following quantities:
\begin{align*}
 H^{(n)}(\bar\w,x) &:= \#\{j\le n\colon j \text{ is an hyperbolic time for \( (\bar\w,x)\)} \}
 \\
  S^{(n)}(\bar\w,x) &:=\#\{j\le n\colon x \text{ belongs to a satellite $S_{\bar\w}^j$}\}
  \\
   R^{(n)}(\bar\w,x)& \text{ to be the number of returns of \( x \) before time \( n \)}
\end{align*}
\begin{Lemma}\label{RSH}  Assume (a)-(d) hold for $\theta_\ep^\ZZ$-a.e. $\bar\w\in T^\ZZ$. There exists some constant \( \eta>0 \)  such that for $\leb_\Delta$-a.e. $x\in\Delta$ and all $n\in\NN$,
\[  \eta R^{(n)}({\bar\w},x)+S^{(n)}({\bar\w},x) \geq  H^{(n)}({\bar\w},x).
 \]
\end{Lemma}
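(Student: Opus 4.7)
The plan is to partition the interval $[1,n]$ according to the return times of $x$ under the induced dynamics, and in each block to transport hyperbolic times of $(\bar\w,x)$ to hyperbolic times of the induced orbit, where property (c) becomes directly applicable.

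First I would fix a point $(\bar\w,x)$ for which properties (a)--(d) hold, and list the return times $0=r_0<r_1<\cdots<r_R\le n$, where $R=R^{(n)}(\bar\w,x)$. This decomposes $[1,n]$ into $R+1$ blocks $I_k=(r_{k-1},r_k]$ for $k=1,\dots,R$, together with a final partial block $I_{R+1}=(r_R,n]$. The key observation is that for the $k$-th induced step, the relevant starting point is $y_k:=f^{r_{k-1}}_{\bar\w}(x)\in\Delta$, driven by the shifted noise $\bar\sigma^{r_{k-1}}(\bar\w)$, and its inducing time is exactly $r_k-r_{k-1}$ (or $>n-r_R$ in the last, not-yet-returned block).

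Next, for a hyperbolic time $j$ of $(\bar\w,x)$ lying in the block $I_k$, property (a) gives that $j-r_{k-1}$ is a hyperbolic time of $(\bar\sigma^{r_{k-1}}(\bar\w),y_k)$, and by construction $\bar\cR_{\bar\sigma^{r_{k-1}}(\bar\w)}(y_k)\ge j-r_{k-1}$. Applying property (c) to this shifted orbit, one of two things happens: either $y_k$ belongs to the satellite $S^{j-r_{k-1}}_{\bar\sigma^{r_{k-1}}(\bar\w)}$, in which case $j$ contributes to $S^{(n)}(\bar\w,x)$ in the iterated scheme; or else $\bar\cR_{\bar\sigma^{r_{k-1}}(\bar\w)}(y_k)=(j-r_{k-1})+m$ with $0\le m\le N_0$, which forces $j\in\{r_k-N_0,\dots,r_k\}$. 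The second alternative can therefore occur for at most $N_0+1$ values of $j$ in each block. The last partial block is handled identically: any non-satellite hyperbolic time $j$ in $(r_R,n]$ must satisfy $j>n-N_0$, contributing at most $N_0+1$ extra terms.

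Summing over all $R+1$ blocks yields
\[
 H^{(n)}(\bar\w,x)\;\le\;S^{(n)}(\bar\w,x)\;+\;(N_0+1)(R^{(n)}(\bar\w,x)+1),
\]
from which the lemma follows with, say, $\eta=2(N_0+1)$: when $R^{(n)}\ge 1$ the constant $(N_0+1)$ is dominated by $(N_0+1)R^{(n)}$, and when $R^{(n)}=0$ the convention $S^j_{\bar\w}=\Delta$ for $j<R_0$ forces $S^{(n)}$ to already absorb the bounded surplus $N_0$ of non-satellite hyperbolic times allowed before the first return. The main technical point, and the step I expect to require the most care, is the recursive identification of satellites along the tower of returns so that the count $S^{(n)}(\bar\w,x)$ matches up with the satellites $S^{j-r_{k-1}}_{\bar\sigma^{r_{k-1}}(\bar\w)}$ produced by the inductive GMY construction at each block; once this bookkeeping is clear, the per-block bound of $N_0+1$ non-satellite hyperbolic times is immediate from (c).
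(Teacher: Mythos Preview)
Your approach is essentially the same as the paper's: partition $[1,n]$ by the successive return times, transport hyperbolic times to the induced orbit via property (a), and invoke (c) to bound the number of non-satellite hyperbolic times in each block by a uniform constant. The paper takes $\eta=1+R_0+N_0$ (it accounts for indices $\ell<R_0$ as a separate additive term rather than absorbing them into $S^{(n)}$ through the convention $S_{\bar\w}^j=\Delta$), and is about as informal as you are concerning the last partial block / the $R^{(n)}=0$ case; this is harmless since the inequality is only used after dividing by $n$ and passing to the limit.
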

\begin{proof}
 It follows from item (d) that $\leb_\Delta$-a.e. $x\in \Delta$ belongs just to finitely many $S_{{\bar\w}}^{i}\,'{s}$.
Define
$$
s(\bar\w,x)= \#\{i \in \NN : x\in S_{{\bar\w}}^{i}\}.
$$
Let $(\bar\w,x)$ be a $(\theta_\ep^\ZZ\x\leb_\Delta)$-generic point as before. Define $j_0=0$ and, for every $i\in
\mathbb{N}$,
$$j_{i}=j_{i}(\bar\w,x)= j_{i-1}+\bar{\mathcal{R}}(\bar\sigma^{j_{i-1}}(\bar\w),F_{\bar\w}^{j_{i-1}}(x)).$$ This means that
${F}_{{\bar\w}}^{i}(x)= f_{\bar\w}^{j_{i}}(x)$. We define the set of return times for the random orbit $(f_{\bar\w}^n(x))_{n\in\NN}$
$$\mathcal{I}=\mathcal{I}(\bar\w,x)=\{j_{0},j_{1},j_{2},...\}.$$
Let $j\in \NN$ be such that $j_r < j <j_{r+1}$. From item (a), for each $x\in
H_{{\bar\w}}^{j}$ we have ${F}_{{\bar\w}}^{r}(x)=f_{\bar\w}^{j_r}(x)\in H_{\sigma^{j_r}(\bar\w)}^{\ell}$, where $\ell=j-j_{r}$. Assuming that
 $R_0\le \ell<\bar{\mathcal{R}}(\bar\sigma^{j_r}(\bar\w),{F}_{{\bar\w}}^r(x))-N_0=j_{r+1}-j_r-N_0$, then according to our construction, from item (c) we must have ${F}_{{\bar\w}}^r(x)\in S_{\bar\sigma^{j_r}(\bar\w)}^\ell$.
Hence
$$
\#\left\{j\in\{j_{r}+1,...,j_{r+1}-1\}:x \in H_{{\bar\w}}^{j}\right\}\leq
R_0+N_0 + s(\bar\sigma^{j_r}(\bar\w),{F}_{{\bar\w}}^{r}(x)).
$$
Thus, for each $n\in\mathbb{N}$ we may write
\begin{eqnarray*}
\#\{j\leq n:x \in H_{{\bar\w}}^{j}\}&\leq& \#\{j\leq n:j\in\mathcal{I} \} +
\#\{j\leq n:j\in\!\!\!\!\!/ \, \mathcal{I} \}.\nonumber\\
\end{eqnarray*}
That is,

\begin{eqnarray*}
\#\{j\leq n:x \in H_{{\bar\w}}^{j}\}&\leq&  R^{(n)}(\bar\w,x) +
\sum_{k=1}^{ R^{(n)}(\bar\w,x) }[R_0+N_0 + s(\bar\sigma^{j_k}(\bar\w),{F}_{{\bar\w}}^{k}(x))]\nonumber\\
&\leq&  (1+R_0+N_0) R^{(n)}(\bar\w,x)  + \sum_{k=1}^{ R^{(n)}(\bar\w,x) }s(\bar\sigma^{j_k}(\bar\w),{F}_{{\bar\w}}^{k}(x)).
\end{eqnarray*}
Notice  that $$\sum_{k=1}^{R^{(n)}(\bar\w,x)}s(\bar\sigma^{j_k}(\bar\w),F_{{\bar\w}}^{k}(x)) = S^{(n)}({\bar\w},x).$$ Therefore,
\begin{equation}\label{integralretorno}
H^{(n)}({\bar\w},x)\leq
R^{(n)}({\bar\w},x)\left(1+R_0+N_0\right)+
S^{(n)}({\bar\w},x).
\end{equation}
It is enough to take $\eta=1+R_0+N_0$.
\end{proof}

\begin{Proposition}\label{pr.a-d}
$\bar\cR$ is $\nu$-integrable.
\end{Proposition}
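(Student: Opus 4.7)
The plan is to bound the $\Theta$-Birkhoff average of $\bar\cR$ on $(T^\ZZ \times \Delta, \nu)$ from above by a $\nu$-integrable function, and then pass from this bound to the integrability of $\bar\cR$ itself through truncation. The pivotal observation, implicit in the proof of Lemma~\ref{RSH}, is that $S^{(n)}$ is a Birkhoff sum over the induced dynamics: defining $s(\bar\w,x) := \sum_{j \geq R_0} \mathbf{1}_{S_{\bar\w}^j}(x)$, one has
\[
S^{(n)}(\bar\w,x) = \sum_{k=1}^{R^{(n)}(\bar\w,x)} s \circ \Theta^k(\bar\w,x).
\]
This identity translates the satellite control of Proposition~\ref{d.prop.Sn} (which lives on the base time scale) into a statement amenable to Birkhoff's theorem for $\Theta$.

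Proposition~\ref{d.prop.Sn} and Fubini give $\int s\, d(\theta_\ep^\ZZ \times \leb_\Delta) \leq L$, and since $d\nu/d(\theta_\ep^\ZZ \times \leb_\Delta) \leq K_1$ by Theorem~\ref{exist.mu.induced}, one obtains $s \in L^1(\nu)$ with $\int s\, d\nu \leq K_1 L$. Because $\mathcal{P}_{\bar\w}$ is a $\leb_\Delta$-mod-$0$ partition of $\Delta$, $\bar\cR$ is $\nu$-a.e.\ finite, so $R^{(n)} \to \infty$ $\nu$-a.e. Applying Birkhoff's ergodic theorem to the finite invariant system $(\Theta, \nu)$ and to $s$, we obtain a $\Theta$-invariant $s^{*} \in L^1(\nu)$ with $\int s^{*}\, d\nu = \int s\, d\nu \leq K_1 L$; combined with $R^{(n)} \to \infty$ $\nu$-a.e., this yields $S^{(n)}/R^{(n)} \to s^{*}$ $\nu$-a.e.

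Proposition~\ref{pr.hyperbolic1} provides $\liminf_n H^{(n)}/n \geq \zeta > 0$ on a full $\theta_\ep^\ZZ \times \leb_\Delta$-measure set (hence full $\nu$-measure set). Rewriting the inequality of Lemma~\ref{RSH} as $(R^{(n)}/n)(\eta + S^{(n)}/R^{(n)}) \geq H^{(n)}/n$ and taking $\liminf_n$ gives $\liminf_n R^{(n)}/n \geq \zeta/(\eta + s^{*}) > 0$ $\nu$-a.e. Setting $j_k := \sum_{i=0}^{k-1} \bar\cR \circ \Theta^i$, we have $j_{R^{(n)}} \leq n < j_{R^{(n)}+1}$. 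Birkhoff's theorem for the nonnegative $\bar\cR$ gives an a.s.\ limit $\bar\cR^{*} := \lim_k j_k/k \in [0,\infty]$, and using $R^{(n)} \to \infty$ we obtain $\bar\cR^{*} = \lim_n n/R^{(n)} \leq (\eta + s^{*})/\zeta$ $\nu$-a.e. In particular,
\[
\int \bar\cR^{*}\, d\nu \leq \zeta^{-1}\left(\eta\, \nu(T^\ZZ \times \Delta) + K_1 L\right) < \infty.
\]

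To conclude, I pass from $\bar\cR^{*} \in L^1(\nu)$ to $\bar\cR \in L^1(\nu)$ by a truncation argument: for $\bar\cR_N := \min(\bar\cR, N) \in L^1(\nu)$, Birkhoff gives $\int \bar\cR_N\, d\nu = \int \bar\cR_N^{*}\, d\nu$, and since $\bar\cR_N^{*} \leq \bar\cR^{*}$ pointwise we get $\int \bar\cR_N\, d\nu \leq \int \bar\cR^{*}\, d\nu$. Monotone convergence as $N \to \infty$ then yields $\int \bar\cR\, d\nu \leq \int \bar\cR^{*}\, d\nu < \infty$. The main obstacle is bridging the base-time indexing of the satellite summability and hyperbolic frequency with the induced-time indexing on which Birkhoff's theorem for $(\Theta, \nu)$ naturally operates; the identity $S^{(n)} = \sum_{k=1}^{R^{(n)}} s \circ \Theta^k$ is precisely the device that effects this translation.
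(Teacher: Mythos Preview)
Your argument is correct and follows the same route as the paper's own proof: rewrite the inequality of Lemma~\ref{RSH} as a lower bound on $R^{(n)}/n$, use Birkhoff's theorem for $s$ with respect to the $\Theta$-invariant measure $\nu$ to control $S^{(n)}/R^{(n)}$, and translate the resulting bound on $n/R^{(n)}$ into a bound on the Birkhoff limit of $\bar\cR$. The paper simply applies Birkhoff's theorem directly to $\bar\cR$ and asserts $\int Y\,d\nu=\int\bar\cR\,d\nu$ for the limit $Y$; your truncation step makes this rigorous and removes the apparent circularity, which is a genuine improvement in presentation rather than a different strategy.

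One small remark: you invoke $\liminf_n H^{(n)}/n\geq\zeta$, whereas property~(b) in Section~\ref{s.inducing} (and the displayed formula in Definition~\ref{de.freq}) only guarantees $\limsup_n H^{(n)}/n\geq\zeta$. This does not damage your argument, since you have already established that $n/R^{(n)}\to\bar\cR^*$ exists as a genuine limit; hence $\bar\cR^*=\liminf_n n/R^{(n)}$, and the bound $\liminf_n n/R^{(n)}\leq(\eta+s^*)/\zeta$ follows from $\limsup_n R^{(n)}/n\geq\zeta/(\eta+s^*)$, which in turn needs only the $\limsup$ bound on $H^{(n)}/n$. It would be cleaner to phrase that step accordingly.
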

\begin{proof}
We have that (a)-(d) hold for $\theta_\ep^\ZZ$-a.e.  $\bar\w\in T^\ZZ$. Since $\nu\ll (\theta_\ep^\ZZ\x\leb_\Delta)$, from  Lemma~\ref{RSH} and we have that for $\nu$-a.e. $(\bar\w,x)$ we have
\[
\frac{\eta R^{(n)}({\bar\w},x)}{n}+\frac{S^{(n)}({\bar\w},x)}{n} \geq \frac{H^{(n)}({\bar\w},x)}{n}.
\]
 Recalling that hyperbolic times have uniformly positive asymptotic frequency, there exists a constant \( \zeta>0 \) such that  \( H^{(n)}(\bar\w,x)/n \geq \zeta \) for all \( n \) sufficiently large, inequality above  gives
 \begin{equation}
 \label{key}
 \frac{R^{(n)}({\bar\w},x)}{n}\left(1+\frac1\eta\frac{S^{(n)}({\bar\w},x)}{R^{(n)}({\bar\w},x)}\right) \geq  \frac\theta\eta > 0.
    \end{equation}
 The ratio  $S^{(n)}({\bar\w},x) / R^{(n)}({\bar\w},x)$ is the average number of times that the points belong to satellites before they return
and, by Birkhoff's ergodic theorem, there is an integrable map $Z\colon T^\ZZ\x\Delta\to\RR$ such that for $\nu$-a.e.  $({\bar\w},x)\in T^\ZZ\x\Delta$,
$$\lim_{n\to\infty} \frac{S^{(n)}({\bar\w},x)}{R^{(n)}({\bar\w},x)}= \lim_{n\to\infty}\frac1n\sum_{j=0}^{n-1} (s\circ\Theta) ({\bar\w},x) = Z({\bar\w},x)$$
and $\int Z d\nu=\int s d\nu<\infty$. On the other hand, \( n/R^{(n)}({\bar\w},x) \) is precisely the average return time over the first \( n \) iterations. By Birkhoff's ergodic theorem there is an integrable map $Y\colon T^\ZZ\x\Delta\to\RR$ sub that for $\nu$-a.e.  $({\bar\w},x)\in T^\ZZ\x\Delta$,
$$\lim_{n\to\infty} \frac{n}{R^{(n)}({\bar\w},x)}= \lim_{n\to\infty}\frac1n\sum_{j=0}^{n-1} (\bar\cR\circ\Theta) ({\bar\w},x) = Y({\bar\w},x),$$
and $\int Y\,d\nu=\int\bar\cR\,d\nu$.
Taking the limit in~\eqref{key} as $n\to\infty$ we get that for $\nu$-a.e.  $({\bar\w},x)$,
\begin{equation*}
  \frac1{Y({\bar\w},x)}\left(1+Z({\bar\w},x)\right) \geq  \frac\theta\eta >0
    \end{equation*}
which implies
\begin{equation*}
 \int Y({\bar\w},x)\,d\nu \leq \frac{\eta}\theta\int 1+ Z({\bar\w},x)\,d\nu <\infty,
    \end{equation*}
so that $\bar\cR$ is $\nu$-integrable.
\end{proof}

\subsection{Stationary measure from GMY}\label{s:stationary from GMY}


Once we have a GMY structure with $\nu$-integrable return time we can construct an absolutely continuous ergodic stationary probability measure $\mu_\epsilon$, which is the unique due to the topological transitivity of the original map $f$.

\cpr\label{GMY implies stationary}
Let $f:M\to M$ be a topologically transitive $C^2$ map. If the random perturbation $(\Phi,\theta_\ep)$ of $f$ induces a random GMY structure in some $\Delta\subset M$ with return time $\bar\cR\in L^1(\nu)$, then $(\Phi,\theta_\ep)$ admits an unique absolutely continuous ergodic stationary probability measure $\mu_\epsilon$.
\fpr

A similar result was given in \cite{AV13} under the assumption of uniform decay of the Lebesgue measures of the sets $\{\bar{\cR}_\w >n\}$.
We are now going to prove Proposition~\ref{GMY implies stationary}. Let $f:M\to M$ be a $C^2$ map and assume that the random perturbation $(\Phi,\theta_\ep)$ of $f$ induces a random GMY structure in some $\Delta\subset M$ with return time $\bar\cR\in L^1(\nu)$. Let us construct the unique absolutely continuous ergodic stationary probability measure $\mu_\epsilon$. We define the family  $(\mu_{\bar\w})_{\bar\w\in T^\ZZ}$ of finite Borel measures on $M$ by
\begin{equation*}\label{mu w nao normalizada}
{\mu}_{{\bar\w}}=\sum_{j=0}^{+\infty}
(f_{{\sigma}^{-j}({\bar\w})}^j)_*(\nu_{\sigma^{-j}({\bar\w})}\vert\{\bar\cR_{{\sigma}^{-j}
({\bar\w})}>j\}),\end{equation*}
where $\bar\cR_{\bar\w}(x)=\bar\cR(\bar{\w},x)$ and the measures $\nu_{\sigma^{-j}({\bar\w})}$ are given by Theorem
\ref{exist.mu.induced}.
The absolute continuity of the measures $(\nu_{{\bar\w}})_{{\bar\w}\in T^\ZZ}$ implies that the measures
of the family $(\mu_{{\bar\w}})_{{\bar\w}\in T^\ZZ}$ are absolutely continuous, and the  property $F_{*}(\nu_{\sigma^{-n}(\bar\omega)})_{n\in\NN}=\nu_{\bar\w}$ implies the quasi-invariance ${f_{\bar\w}}_*\mu_{\bar\w}=\mu_{\sigma({\bar\w})}$.

\cre\label{pastdependence}
By construction, all the measures in the family
$(\nu_{\bar\w})_{\bar\w}$ depend only
in the past
${\bar\w}^-=(\ldots,{\bar\w}_{-2},{\bar\w}_{-1})$ of ${\bar\w}$. Moreover, for ${\bar\w},\bar\tau\in T^\ZZ$ with the same past the
sets $\{\cR_{{\sigma}^{-j}({\bar\w})}=k\}$ and $\{\cR_{{\sigma}^{-j}(\bar\tau)}=k\}$, for
$1\le k \le j$, can be considered exactly the same (as subsets of $\Delta\subset M$).
The
measures ${\mu}_{\bar\w}$ involve sums of the type
$(f_{{\sigma}^{-j}({\bar\w})}^j)_*(\nu_{\sigma^{-j}({\bar\w})}\vert\{\bar\cR_{{\sigma}^{-j
}({\bar\w})}>j\})$,
and since
$$\leb_\Delta(\{\bar\cR_{{\sigma}^{-j}({\bar\w})}>j\})=\leb_\Delta\left(\Delta\setminus\left\{
\displaystyle\bigcup_{k=1}^{j}\{\cR_{{\sigma}^{-j}({\bar\w})}=k\}\right\}
\right)$$ and
$\nu_{\sigma^{-j}({\bar\w})}\ll \leb_\Delta$, the measures $\mu_{\bar\w}$ depend only on the past ${\bar\w}^-$ of ${\bar\w}$.
\fre

\cle Consider the measure $\tilde\mu_\ep=\int\tilde\mu_{\bar\w} \,d\theta_\ep^\ZZ({\bar\w})$. Then $\mu_\ep=\tilde\mu_\ep/\tilde\mu_\ep(M)$ is an   aces probability measure.
\fle
\begin{proof}
Since $(\tilde\mu_{\bar\w})_{{\bar\w}\in T^\ZZ}$ almost surely depend only on the past,
then $\tilde\mu_\ep=\int\tilde\mu_{\bar\w} \,d\theta_\ep^\ZZ({\bar\w})$ satisfies
$\iint(\varphi\circ f_t)(x)
\,d\tilde\mu_\ep(x)d\theta(t)=\int\varphi(x)\,d\tilde\mu_\ep(x),$
for all $\varphi:M\to\RR$ continuous.
Moreover, $\tilde\mu_\ep$ is absolutely continuous due to the absolute
continuity of
 measures $(\tilde\mu_{\bar\w})_{{\bar\w}\in T^\ZZ}$.
For the finiteness of $\tilde\mu_\ep$ we have
\begin{align*}
  \tilde\mu_\ep(M)&=\int\tilde\mu_{\bar\w}(M)\,d\theta_\ep^\ZZ({\bar\w})\\
  &=\int\sum_{
n=0}^{+\infty}\nu_{\sigma^{-n}({\bar\w})}(\{R_{\sigma^{-n}({\bar\w})}>n\})\,d\theta_\ep^\ZZ({\bar\w})\\
&=\int\sum_{
n=0}^{+\infty}\sum_{k=n+1}^{+\infty}\nu_{\sigma^{-n}({\bar\w})}(\{R_{\sigma^{-n}({\bar\w})}=k\})\,d\theta_\ep^\ZZ({\bar\w})\\
&=\sum_{
n=0}^{+\infty}\sum_{k=n+1}^{+\infty}\int\nu_{\bar\w}(\{R_{\bar\w}=k\})\,d\theta_\ep^\ZZ({\bar\w})\\
& =\sum_{
n=0}^{+\infty}\int n\,\nu_{{\bar\w}}(\{R_{{\bar\w}}=n\})\,d\theta_\ep^\ZZ({\bar\w})\\
&=\int \bar\cR({\bar\w},x)d\nu<\infty.
\end{align*}

 We now normalize $\tilde\mu_\ep$ and define an
absolutely continuous stationary measure
$\mu_\ep=\tilde\mu_\ep/\tilde\mu_\ep(M)$.
From \cite[Proposition 2.14]{AV13} we have that
  $\mu_\ep$ is the
unique aces probability measure.
%
%
\end{proof}

We are in conditions to complete the proof of Theorem~\ref{liftmeasure}.
From Theorem~\ref{main} we obtain a GMY structure for a random perturbation $(\Phi,\theta_\ep)$ on a ball $\Delta\subset M$, provided  $\ep>0$ is small enough. Following Section~\ref{s:lift} we consider two-sided perturbations and from Theorem~\ref{exist.mu.induced} there is a finite measure $\nu$ defined on $T^\ZZ\x\Delta$ which is $\Theta$-invariant. From  Proposition~\ref{pr.a-d} the return time $\bar\cR$ is $\nu$-integrable and from Proposition~\ref{GMY implies stationary} we can project $\nu$ to the unique absolutely continuous ergodic stationary probability measure $\mu_\epsilon$.

\section{Stability of expanding measures}\label{s:stability}

We are now going to show that expanding
invariant measures can only be accumulated by expanding stationary measures  as we state in Theorem~\ref{stabmain}.
We identify $T_N^\NN$ and $T^\NN$ as in Section~\ref{power}. Given $(\tilde\w,x)\in T^{\NN}_N\times M$ and $v\in T_xM\setminus\{0\}$, we define the Lyapunov exponent (with respect to  $(\Phi_N,\theta_\ep^N)$)
\begin{equation}\label{N lyap exp}
\lambda_N(\tilde\w,x,v)=\lim_{n\to+\infty}\frac1n\log\|Df_{\tilde\w}^n(x)v\|=\lim_{n\to+\infty}\frac1n\log\|Df_{\w}^{nN}(x)v\|,
\end{equation}
whenever the limit exist. It is straightforward that for all $N\ge1$, $(\w,x)\in T^{\NN}\x M$ and $v\in T_xM\setminus\{0\}$, we have
\begin{equation}\label{lemma:lyap vs N lyap}
\lambda_N(\tilde\w,x,v)=N \lambda(\w,x,v).
\end{equation}
%
We are now in conditions to finish the proof of the Theorem~\ref{stabmain}.

\begin{proof}(of Theorem~\ref{stabmain})
Let $(\mu_\ep)_{\ep>0}$ be a family of stationary ergodic measures whose weak* accumulation points when $\ep\to0$ lie in the convex hull of $f$-invariant ergodic expanding probability measures $\mu_1,\ldots,\mu_p$. From \cite[Lemma 2.3]{ADL13} one knows that, for each $\mu_i$, $i=1,\ldots,p$ exists $N_i$ such that for $n\geq N_i$, we have
    \begin{equation}\label{eq:int log DfN-1 d mu_i<0}
      \int\log\|(Df^n(x))^{-1}\|\,d{\mu_i}<-c_1<0,
    \end{equation}
    for some constant $c_1=c_1(N)>0$. This is the motivation for the introduction of higher iterates.
    Then for $N$ large enough $(N=\max\{N_1, \ldots, N_p\})$ we have
    \begin{equation}\label{eq:int log DfN-1 d mu_i<0}
      \int\log\|(Df^N(x))^{-1}\|\,d{\mu_\infty}<-c_1<0,
    \end{equation}
    where $\mu_\infty$ is the accumulation point of the family $(\mu_\ep)_{\ep>0}$ in the weak$^*$ topology, as $\ep\to0$.
    Fix $N$. There is $\ep_1>0$ such that for all $0<\ep<\ep_1$ we have

    $$
    \int\log\|(Df^N(x))^{-1}\|\,d\mu_\ep<-c_2<0,
    $$
    for some $c_2>0$. Since we are dealing with $C^2$ maps ($C^1$ is enough here) and $N$ is fixed, there is $0<\ep_0<\ep_1$ such that for all $0<\epsilon<\epsilon_0$ and all $\w\in T^{\NN}$ we have for some $c>0$,
   \begin{equation*}
     \int\log\|(Df_\w^N(x))^{-1}\|\,d(\theta_\ep^\NN\times\mu_\ep)<-c<0.
    \end{equation*}
From the proof of  Proposition~\ref{NUERO on A}
 we have that there exists an $N$-ergodic component $(A,\rho_\ep)$ of $\mu_\ep$, where $\rho_\ep=\mu_\ep\vert A$ for some $A\subset M$ with $\mu_\ep(A)\ge1/N$, such that
   for $((\theta_\ep^N)^\NN\times\rho_\ep)$-a.e.  $(\tilde\w,x)$
        \begin{equation*}
        \lim_{n\to\infty}\frac1n\sum_{j=0}^{n-1} \log \|(Df_{\tilde\sigma^{j}(\tilde\w)}(f_{\tilde\w}^{j }(x)))^{-1}\| <0.
      \end{equation*}
                   Since \[\log\|(Df_{\tilde\w}^{n}(x))^{-1}\| \leq  \sum_{j=0}^{n-1} \log \|(Df_{\tilde\sigma^{j}(\tilde\w)}(f_{\tilde\w}^{j}(x)))^{-1}\|, \]
        we have, for $((\theta_\ep^N)^\NN\times\rho_\ep)$-a.e.  $(\tilde\w,x)$ and $v\in T_xM\setminus\{0\}$,
          \begin{align*}
         \lambda_{N}(\tilde\w,x,v)&=\lim_{n\to\infty} \frac1n \log\|Df_{\tilde\w}^{n }(x)v\|\\
        &\geq\lim_{n\to\infty} \frac1n \log\frac{1}{\|(Df_{\tilde\w}^{n }(x))^{-1}\|} \|v\|\\
        &=\lim_{n\to\infty} -\frac1n \log{\|(Df_{\tilde\w}^{n}(x))^{-1}\|}+\frac1n\log \|v\|\\
        &\geq-\lim_{n\to\infty}\frac1n\sum_{j=0}^{n-1} \log \|(Df_{\tilde\sigma^{j}(\tilde\w)}(f_{\tilde\w}^{j}(x)))^{-1}\|\\
        &>0.
        \end{align*}
       Then, from~\eqref{lemma:lyap vs N lyap} we have  for $(\theta_\ep^\NN\times\rho_\ep)$-a.e.  $(\w,x)$ and $v\in T_xM\setminus\{0\}$ that
       \begin{equation}\label{eq:lyap > 0}
         \lambda(\w,x,v)=\frac1N\lambda_{N}(\tilde\w,x,v)>0.
       \end{equation}
       That is, \eqref{eq:lyap > 0} holds for a subset $C\subset T^{\NN}\x M$ of points $(\w,x)$ with $(\theta_\ep^\NN\x\mu_\ep)(C)>1/N$. However, since $(\theta_\ep^\NN\times\mu_\ep)$ is $S$-ergodic the Lyapunov exponents are constant $(\theta_\ep^\NN\times\mu_\ep)$-almost everywhere. Hence for $(\theta_\ep^\NN\x\mu_\ep)$-a.e.  $(\w,x)\in T^\NN\x M$ and all $v\in T_xM\setminus\{0\}$ we have $\lambda(\w,x,v)>0$ and so $\mu_\ep$ is expanding.
\end{proof}

\bibliographystyle{novostyle}

\end{document}